\newtheorem{theorem}{Theorem}[section]
\newtheorem{lemma}{Lemma}[section]
\newtheorem{problem}{Problem}
\theoremstyle{definition}
\numberwithin{equation}{section}
\title{\textbf{A higher order numerical scheme for singularly perturbed parabolic turning point problems exhibiting twin boundary layers}}
\author{Swati Yadav\thanks{Department of Mathematics,
		University of Delhi, Delhi-110007, India.
		({\tt swatiyadav3317@gmail.com}).}, ~Pratima Rai\thanks{ Department of Mathematics, University of Delhi, Delhi-110007, India.
		({\tt prai@maths.du.ac.in}).} ~\thanks{Corresponding author.}}
\date{\today}
\begin{document}
	
	\maketitle
	
	\begin{abstract}
In this article, a parameter-uniform numerical method is presented to solve one-dimensional singularly perturbed parabolic convection-diffusion multiple turning point problems exhibiting two exponential boundary layers. We study the asymptotic behaviour of the solution and its partial derivatives. The problem is discretized using the implicit Euler method for time discretization on a uniform mesh and a hybrid scheme for spatial discretization on a generalized Shishkin mesh. The scheme is shown to be $\varepsilon$-uniformly convergent of order one in time direction and order two in spatial direction upto a logarithmic factor. Numerical experiments are conducted to validate the theoretical results. Comparison is done with upwind scheme on uniform mesh as well as on standard Shishkin mesh to demonstrate the higher order accuracy of the proposed scheme.

		\textit{Keywords} :  Singular perturbation, parabolic  convection-diffusion equations, turning point, hybrid scheme, twin boundary layers, Shishkin mesh.\\
		\textbf{MSC classification 2010:} 65M12, 65M50, 65M06, 65M15.
	\end{abstract}
	
\section{Introduction}
Singularly perturbed differential equations are model equations for convection-diffusion processes in various physical phenomena and engineering problems, such as heat and mass transport problem with high Pe\`clet numbers, fluid flow at high Reynolds numbers, the drift-diffusion equation in the modeling of semiconductor devices, financial mathematics, quantum physics, etc. A singularly perturbed equation contains a small parameter $\varepsilon$ multiplied with the highest derivative term. The solution of these problems changes rapidly in a thin region as $\varepsilon$ approaches zero. These layer regions are referred to as boundary layers in fluid mechanics, shock layers in fluid and solid mechanics, edge layers in solid mechanics,  transition points in quantum mechanics, skin layers in electrical applications and Stokes lines and surfaces in mathematics. Classical numerical methods fail to capture the behaviour of the solution of the singularly perturbed problems. In order to overcome this difficulty, various special methods based on the fitted mesh and fitted operator technique have been adopted in literature. For more details one may refer to \cite{farrell,miller,roos}.\newline
In this article, we consider the following class of singularly perturbed parabolic problems on a rectangular domain $Q$ with degenerating convective term 
\begin{align}
\label{1.1}
L_{\varepsilon} u(x,t) &= f(x,t), \quad (x,t) \in Q= \Omega_x \times \Omega_t,\\
\label{1.2}
u(x,t)&=g(x,t), \quad (x,t) \in S, 
\end{align} 
where
\begin{align*}
L_{\varepsilon}u(x,t)= \left \{ \varepsilon \frac{\partial^2}{\partial x^2} + a(x,t) \frac{\partial}{\partial x} - d(x,t) \frac{\partial}{\partial t} - b(x,t) \right \}u(x,t),
\end{align*}
$0 < \varepsilon \ll 1$, $S= \overline{Q} \backslash Q$, $\Omega_x=(-1,1)$ and $\Omega_t= (0,T]$. The coefficients $a(x,t)$, $ b(x,t)$, $ d(x,t) $ and $ f(x,t) $ are sufficiently smooth functions such that 
\begin{align}\label{1.3}
a(x,t) &=- a_{0}(x,t)x^{p} ,\hspace{0.3cm} p \geq 1 ~ \text{is a non-negative odd integer}, ~ \text{for}~ (x,t)\in \overline{Q},\nonumber \\
a_0(x,t) &\geq \alpha_0 > 0,~ \text{for}~ (x,t) \in \overline{Q},\nonumber \\
b(x,t) &\geq \beta > 0 , ~ \text{for}~ (x,t) \in \overline{Q},\nonumber \\
d(x,t) &\geq \gamma \geq 0 ,~\text{for}~ (x,t) \in \overline{Q}.
\end{align}
The set $\overline{Q}_{R}$ and $\overline{Q}_{L}$ are defined as
\begin{align*}
\overline{Q}_{L} = [-1,0] \times [0,T] ~ \text{and} ~ \overline{Q}_{R}=[0,1] \times [0,T].
\end{align*}
Sufficient regularity conditions are imposed on the data of the problem (\ref{1.1})-(\ref{1.3}) which guarantee the smoothness of the solution on the set $\overline{Q}$. The set $S$ is denoted by $S=S_l \cup S_x \cup S_r$, where $S_{x}=[-1,1] \times \{t=0\}$, $S_{l}= \{ x=-1 \} \times (0,T]$, $S_{r}= \{ x=1 \} \times (0,T]$. Also, we define the boundaries corresponding to the domains $Q_L$ and $Q_R$ as $S_L=\overline{Q}_L \backslash Q_L=S_{l} \cup S_{0} \cup (S_x \cap \overline{Q}_L)$, $S_R=\overline{Q}_R \backslash Q_R=S_{r} \cup S_0 \cup (S_x \cap \overline{Q}_R)$, where $S_0= \{x=0 \} \times (0,T]$. The data of the problem (\ref{1.1})-(\ref{1.3}) are assumed to be sufficiently smooth and the compatibility conditions are imposed at the corner points $(-1,0)$ and $(1,0)$ to ensure the desired smoothness of the solution of the problem (\ref{1.1})-(\ref{1.3}) for our analysis. The assumed compatibility conditions will ensure the existence of the unique solution $u(x,t)$ of the problem (\ref{1.1})-(\ref{1.3}) which belongs to $C(\overline{Q}) \cap ( C^{4+\lambda,2+\lambda/2}(\overline{Q}_{L}) \cup C^{4+\lambda,2+\lambda/2}(\overline{Q}_{R})) $, where $\lambda \in (0,1)$.	\newline
The problem (\ref{1.1})-(\ref{1.3}) is an interior turning point problem. For $p > 1$, the problem (\ref{1.1})-(\ref{1.3}) has multiple turning point. Singularly perturbed turning point problems arise in the mathematical modeling of various physical phenomenon. Interior turning point problems are convection-diffusion problems with a dominant convective term and a speed field that changes its sign in catch basin. The characteristic curves of the reduced problem are parallel to the boundaries $S_r$ and $S_l$. The solution of the problem possesses parabolic boundary layers in the neighborhood of $S_l$ and $S_r$.\newline
The singularly perturbed ordinary differential equation with a simple turning point exhibiting twin boundary layers has been extensively studied by many authors \cite{becher,geng2013,kadalbajoo2011, kadalbajoo2010,kadalbajoo2001,kumar2019,natesan2003}. Kadalbajoo et al.~\cite{kadalbajoo2001} constructed a second-order accurate numerical method based on cubic splines with a non-uniform grid to solve a singularly perturbed two point boundary value problem with a turning point exhibiting twin boundary layers. Natesan et al. \cite{natesan2003} proposed a fitted mesh method to approximate the solution of the singularly perturbed one dimensional simple turning point problem exhibiting twin boundary layers. The authors proved that the proposed method is parameter uniform with first-order accuracy. Kadalbajoo et al. \cite{kadalbajoo2010} used B-spline collocation method on a piecewise-uniform Shishkin mesh to approximate the solution of singularly perturbed two-point boundary value problems with interior simple turning point exhibiting twin boundary layers. The method is second-order accurate in the maximum norm. Further, Kadalbajoo et al. \cite{kadalbajoo2011} examined a stiff singularly perturbed boundary value problem with simple interior turning point having two boundary layers. The authors constructed a fitted operator finite difference scheme of Il'in type using cubic splines, collocation and an artificial viscosity.  The reproducing kernel method along with the method of scaling was examined by Geng et al. \cite{geng2013} to solve the singularly perturbed turning point problem with twin boundary layers. Becher et al. \cite{becher} considered the singularly perturbed turning point problem with two boundary layers. The authors applied Richardson extrapolation method on classical finite difference scheme with piecewise uniform Shishkin mesh to improve the order of convergence from $O(N^{-1} \log N)$ to $O(N^{-2} \log ^2N)$. In a recent paper, Devendra \cite{kumar2019} considered the singularly perturbed two point boundary value problem with interior turning point. The author discussed both the cases, one in which the solution exhibit interior layer and the other when twin boundary layers are present in the solution. The numerical method used to approximate the solution of the problem comprised of quintic B-spline collocation method on appropriate piecewise uniform Shishkin mesh. For singularly perturbed two-point boundary value problem with multiple interior turning point one can refer to \cite{vulanovic}. \newline
In this article, we extend the study of singularly perturbed turning point problems exhibiting twin boundary layers to time dependent case. To the best of our knowledge, no paper have analysed the twin boundary layers occurring due to interior multiple turning point for singularly perturbed parabolic convection-diffusion problem. In general, the numerical treatment of turning point problems are more difficult than the non-turning point problems because the convection coefficient vanishes inside the domain. In addition, developing a higher order scheme for such problems is of great importance in the field of numerical analysis. In the process, we develop a scheme which comprises of implicit Euler method for time discretization on uniform mesh and a hybrid scheme for spatial discretization on a generalized Shishkin mesh. It is observed that the standard central difference scheme is stable in maximum norm for sufficiently small step size i.e. when $h \leq C \varepsilon$ and the midpoint upwind scheme is of second order convergent outside the layer region. Therefore, we develop the hybrid scheme in such a way that the central difference scheme is applied on the set of indices where it is stable otherwise midpoint upwind scheme is applied. The proposed scheme is parameter uniform with first-order accuracy in time variable and almost second-order accuracy in space variable. \newline
The article is organized as follows: In \textbf{Section 2}, we provide a priori bounds on the derivatives of the solution of the considered problem and also obtain sharper bounds via decomposing the solution into regular and singular components. In \textbf{Section 3}, we construct a generalized piecewise uniform Shishkin mesh and propose a scheme to discretize the considered problem. Also, we discuss the stability of the proposed scheme on the generalized Shishkin mesh. Further, in \textbf{Section 4}, we study and analyse the proposed scheme to prove the $\varepsilon$-uniform convergence of order $O(\Delta t + N^{-2}L^2)$, where $L \sim \log N$. In \textbf{Section 5}, the accuracy of the proposed scheme is demonstrated by conducting and discussing the numerical experiments via tables and graphs. Finally, we end this article with some conclusions given in \textbf{Section 6}. \newline

\textbf{Notations:} Throughout this article, we use C as a generic  positive constant independent of $\varepsilon$ and of the mesh parameters. All the functions defined on the domain Q are measured in supremum norm, denoted by
\begin{align*}
\| f \|_{\overline{Q}} = \sup \limits_{x \in \overline{Q}} |f(x)|.
\end{align*} 
	\section{Analytical Results}
	In this section, the analytical aspects of the solution of the problem (\ref{1.1})-(\ref{1.3}) and its derivatives are studied. The bounds on the solution of the problem (\ref{1.1})-(\ref{1.3}) and its derivatives are derived.\newline
	The operator $L_{\varepsilon}$ satisfies the following minimum principle 
	\begin{lemma}[Minimum principle] \label{lm2.1}
	Let $v$ be a smooth function satisfying $L_{\varepsilon} v(x,t) \leq 0$, $\text{for} ~(x,t) \in Q$ and $v(x,t) \geq 0$, $\text{for} ~(x,t) \in S$, then $v(x,t) \geq 0$, $\text{for} ~ (x,t)\in \overline{Q}$.
	\end{lemma}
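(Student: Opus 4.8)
The plan is to argue by contradiction, using the strict positivity of the reaction coefficient $b \geq \beta > 0$. Suppose the conclusion fails, so that $v$ takes a negative value somewhere on $\overline{Q}$. Since $\overline{Q}$ is compact and $v$ is continuous (indeed smooth), $v$ attains its minimum at some point $(x^{*},t^{*})\in\overline{Q}$, and by assumption $v(x^{*},t^{*})<0$. Because $v(x,t)\geq 0$ on the parabolic boundary $S=S_{l}\cup S_{x}\cup S_{r}$, this minimizing point cannot lie on $S$; hence it must satisfy $x^{*}\in(-1,1)$ and $t^{*}\in(0,T]$, i.e.\ $(x^{*},t^{*})\in Q$.

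First I would record the calculus conditions forced by $(x^{*},t^{*})$ being an interior-in-$x$ minimum: $\dfrac{\partial v}{\partial x}(x^{*},t^{*})=0$ and $\dfrac{\partial^{2}v}{\partial x^{2}}(x^{*},t^{*})\geq 0$. For the time derivative I would distinguish two cases. If $t^{*}<T$, then $t^{*}$ is an interior time and $\dfrac{\partial v}{\partial t}(x^{*},t^{*})=0$. If $t^{*}=T$, then since $v$ decreases toward its minimum as $t\uparrow T$, a one-sided argument gives $\dfrac{\partial v}{\partial t}(x^{*},t^{*})\leq 0$. In either case $\dfrac{\partial v}{\partial t}(x^{*},t^{*})\leq 0$.

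Then I would evaluate $L_{\varepsilon}v$ at $(x^{*},t^{*})$ term by term using the sign conditions in (\ref{1.3}). The diffusion term satisfies $\varepsilon\,\dfrac{\partial^{2}v}{\partial x^{2}}\geq 0$; the convection term is $a\,\dfrac{\partial v}{\partial x}=0$; the term $-d\,\dfrac{\partial v}{\partial t}\geq 0$ since $d\geq\gamma\geq 0$ and $\dfrac{\partial v}{\partial t}\leq 0$; and the reaction term $-b\,v>0$ strictly, since $b\geq\beta>0$ and $v(x^{*},t^{*})<0$. Summing these contributions yields $L_{\varepsilon}v(x^{*},t^{*})>0$, which contradicts the hypothesis $L_{\varepsilon}v\leq 0$ on $Q$. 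Therefore no negative minimum can exist and $v(x,t)\geq 0$ on all of $\overline{Q}$.

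The main obstacle is the careful handling of the endpoint case $t^{*}=T$: because $\Omega_{t}=(0,T]$ includes its right endpoint, the minimum may genuinely be attained on the top edge, and there only the one-sided inequality $\partial v/\partial t\leq 0$ is available rather than equality. It is worth emphasizing that the strict positivity of $b$ is precisely what converts the chain of weak inequalities into a strict one, sealing the contradiction even in the degenerate situation $d\equiv 0$ where the operator loses its parabolic time term.
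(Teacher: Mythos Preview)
Your argument is correct and is precisely the classical negative-minimum contradiction proof of the parabolic minimum principle; the paper itself does not write out a proof but simply cites Protter--Weinberger, where essentially this same argument appears. Your handling of the endpoint case $t^{*}=T$ and your observation that the strict positivity of $b$ is what delivers the strict inequality are both on point.
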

	\begin{proof}
	The proof easily follows from \cite{Potter}. 
	\end{proof}
	Using the above minimum principle it can be proved that the solution of the problem (\ref{1.1})-(\ref{1.3}) satisfies the following stability estimate.
	\begin{lemma}\label{lm2.2}
	The exact solution $v(x,t)$ of the problem (\ref{1.1})-(\ref{1.3}) satisfies the following bound
	\begin{align*}
	\| v \|_{\overline{Q}} \leq T \|f\|_{\overline{Q}} + \|v\|_{S}.
    \end{align*}	 
	\end{lemma}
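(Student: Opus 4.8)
The plan is to establish this stability bound by the comparison (barrier) technique, leaning on the minimum principle of Lemma~\ref{lm2.1}. Since $v$ solves $L_{\varepsilon}v=f$ in $Q$, I would introduce the pair of barrier functions
\[
\psi^{\pm}(x,t) = t\,\|f\|_{\overline{Q}} + \|v\|_{S} \pm v(x,t), \qquad (x,t)\in\overline{Q},
\]
and show that each $\psi^{\pm}$ is nonnegative on $S$ and satisfies $L_{\varepsilon}\psi^{\pm}\le 0$ in $Q$, so that Lemma~\ref{lm2.1} forces $\psi^{\pm}\ge 0$ throughout $\overline{Q}$. The decisive design choice is that the barrier \emph{grows} linearly in $t$; this is exactly what makes the $-d\,\partial_t$ term contribute with a favorable (nonpositive) sign.

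First I would dispose of the boundary check. On the initial segment $S_x$ (where $t=0$) one has $\psi^{\pm}=\|v\|_{S}\pm v \ge \|v\|_{S}-|v|\ge 0$, while on the lateral parts $S_l,S_r$ (where $0<t\le T$) the extra term $t\,\|f\|_{\overline{Q}}\ge 0$ only helps, so again $\psi^{\pm}\ge \|v\|_{S}-|v|\ge 0$. Hence $\psi^{\pm}\ge 0$ on all of $S$.

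The heart of the argument, and the step I expect to require the most care, is the interior inequality. Because the $x$-dependence of the barrier is trivial, the terms $\varepsilon\,\partial_{xx}\psi^{\pm}=\pm\varepsilon v_{xx}$ and $a\,\partial_x\psi^{\pm}=\pm a v_x$ reproduce exactly the diffusion and convection parts of $L_{\varepsilon}v$; consequently the turning-point coefficient $a(x,t)=-a_0 x^{p}$ and the parameter $\varepsilon$ collapse harmlessly into $\pm f$ and play no role here. Using $\partial_t\!\left(t\,\|f\|_{\overline{Q}}\right)=\|f\|_{\overline{Q}}$ together with $L_{\varepsilon}v=f$, I obtain
\[
L_{\varepsilon}\psi^{\pm} = \pm f \;-\; d(x,t)\,\|f\|_{\overline{Q}} \;-\; b(x,t)\bigl(t\,\|f\|_{\overline{Q}}+\|v\|_{S}\bigr).
\]
The crux is to verify that the right-hand side is $\le 0$ in $Q$: the term $\pm f$ is controlled by $\|f\|_{\overline{Q}}$, while the contributions $-d\,\|f\|_{\overline{Q}}$ (with $d\ge\gamma\ge 0$) and $-b(\cdots)$ (with $b\ge\beta>0$, $t\ge 0$, $\|v\|_{S}\ge 0$) carry the correct sign and must be shown to absorb it. This reaction-plus-time combination dominating $\pm f$ is the only delicate point; the spatial structure that makes the problem singular and turning-point is irrelevant to it.

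Finally, Lemma~\ref{lm2.1} gives $\psi^{\pm}\ge 0$ on $\overline{Q}$, that is $\mp v \le t\,\|f\|_{\overline{Q}}+\|v\|_{S}$, whence $|v(x,t)|\le t\,\|f\|_{\overline{Q}}+\|v\|_{S}\le T\,\|f\|_{\overline{Q}}+\|v\|_{S}$ for every $(x,t)\in\overline{Q}$. Taking the supremum over $\overline{Q}$ yields the claimed estimate $\|v\|_{\overline{Q}}\le T\,\|f\|_{\overline{Q}}+\|v\|_{S}$.
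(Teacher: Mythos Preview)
Your proposal is essentially identical to the paper's proof: both use the barrier functions $\Psi^{\pm}(x,t)=t\,\|f\|_{\overline{Q}}+\|v\|_{S}\pm v(x,t)$ and then invoke the minimum principle (Lemma~\ref{lm2.1}). The paper's argument is terser and does not spell out the boundary and interior checks you sketch, but the method is the same.
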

	\begin{proof}
	Defining the following barrier functions
	\begin{align*}
	\Psi^{\pm}(x,t) = t \|f\|_{\overline{Q}} + \|v\|_{S} \pm v(x,t), \quad (x,t) \in Q,
	\end{align*}
	and using the minimum principle we can obtain the required estimate.
	\end{proof}
	Now, we will derive the coarse bounds for the derivatives of the solution $u(x,t)$ of the problem (\ref{1.1})-(\ref{1.3}) on the domains $\overline{Q}_L$ and $\overline{Q}_R$. We will write down the compatibility conditions which will ensure the existence of the unique solution $u(x,t) \in C^{K+\lambda,K/2+\lambda/2}(\overline{Q}_L) \cap C^{K+\lambda,K/2+\lambda/2}(\overline{Q}_R)$. The compatibility conditions for the derivatives $\partial^{k_0}u(x,t)/\partial t^{k_0}$,~$2k_0 \leq K$ on the set of the corner points, e.g., for the case when the initial condition $g(x,0)$ and its derivatives vanish on the set $S^c=(S_r \cup S_l) \cap S_x$ (the set of corner points $(-1,0)$ and $(1,0)$) and $S_0^c=S_0 \cap S_x$ (the point $(0,0)$, is the left corner point of the domain $\overline{Q}_R$ and right corner point of the domain $\overline{Q}_L$) are defined as
	\begin{align} \label{c1}
	\begin{cases}
	\dfrac{\partial^{k_0} g(x,t)}{\partial t^{k_0}}=0, ~\dfrac{\partial^{k} g(x,t)}{\partial x^{k}}=0, ~ 0 \leq k+2k_0 \leq l, \\\\
	\dfrac{\partial^{k+k_0} f(x,t)}{\partial x^{k} \partial t^{k_0}}=0,~ 0 \leq k+2k_0 \leq l-2, ~(x,t) \in S^c \cup S_0^c,
	\end{cases}
	\end{align}
	where $l>0$ is even. By virtue of the conditions (\ref{c1}), we have
	\begin{align} \label{c12}
	\dfrac{\partial^{k_0}}{\partial t^{k_0}}u(x,t)=0,~(x,t) \in S_0^c,~2k_0 \leq l.
	\end{align}
	 Thus, at the point $(0,0)$ the compatibility conditions for the derivatives of $t$ upto order $K/2$ are satisfied, where $K=l$. The compatibility conditions in the corners points of the domains $\overline{Q}_R$ and $\overline{Q}_L$ are fulfilled by the conditions (\ref{c1}) and (\ref{c12}), which are sufficient for $u(x,t) \in C^{K+\lambda,K/2+\lambda/2}(\overline{Q}_L) \cap C^{K+\lambda,K/2+\lambda/2}(\overline{Q}_R)$ (see \cite{MR0241822}).
	\begin{theorem} \label{thm2.1}
	Assume that $a(x,t),~d(x,t),~b(x,t),~f(x,t) \in C^{2+\lambda,1+\lambda/2}(\overline{Q}_R) \cap C^{2+\lambda,1+\lambda/2}(\overline{Q}_L)$, $g(x,t) \in C^{4+\lambda,2 +\lambda /2}(S_x \cap \overline{Q}_R) \cap C^{4+\lambda,2 +\lambda /2}(S_x \cap \overline{Q}_L) \cap C^{4+\lambda,2 +\lambda/2}(\overline{S}_l \cup \overline{S}_r) $ and the compatibility conditions (\ref{c1}) are fulfilled for $K=4$. Then, the solution $u(x,t)$ of the problem (\ref{1.1})-(\ref{1.3}) is such that for all non-negative integers $i,j$, $0\leq i+2j \leq 4$, we have
	\begin{align*}
	\left\| \dfrac{\partial^{i+j}u}{\partial x^i \partial t^j}\right\|_{\overline{Q}} \leq C \varepsilon^{-i}.
	\end{align*}
	\end{theorem}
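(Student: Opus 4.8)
The plan is to establish these crude derivative bounds by combining the stability estimate of Lemma~\ref{lm2.2} with a local change of variable that ``stretches'' the boundary layers, followed by the classical interior a priori (Schauder) estimates for parabolic equations from \cite{MR0241822}, transported back to the original coordinates via the chain rule. First I would record the zeroth-order bound: by Lemma~\ref{lm2.2} and the hypotheses on the data, $\|u\|_{\overline{Q}}\le C$ with $C$ independent of $\varepsilon$. Because $a(x,t)=-a_0(x,t)x^{p}$ vanishes at the turning point $x=0$, and because the compatibility conditions \eqref{c1} with $K=4$ only guarantee $u\in C^{4+\lambda,2+\lambda/2}(\overline{Q}_L)\cap C^{4+\lambda,2+\lambda/2}(\overline{Q}_R)$, all of the work is carried out separately on the two subdomains $\overline{Q}_L$ and $\overline{Q}_R$.

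Next, fix a base point $(x^{*},t^{*})$ in (say) $\overline{Q}_R$ and introduce the stretched variable $\xi=(x-x^{*})/\varepsilon$, setting $\tilde u(\xi,t)=u(x^{*}+\varepsilon\xi,t)$. Multiplying the transformed equation by $\varepsilon$ turns $L_{\varepsilon}u=f$ into
\begin{align*}
\partial_{\xi}^{2}\tilde u+\tilde a\,\partial_{\xi}\tilde u-\varepsilon\tilde d\,\partial_{t}\tilde u-\varepsilon\tilde b\,\tilde u=\varepsilon\tilde f
\end{align*}
on a cylinder of $O(1)$ size, where $\tilde a,\tilde b,\tilde d,\tilde f$ denote the coefficients evaluated at $(x^{*}+\varepsilon\xi,t)$ and are therefore bounded uniformly in $\varepsilon$ together with their derivatives. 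The principal spatial coefficient is now $1$ and the right-hand side is $O(\varepsilon)$, while $\|\tilde u\|\le C$. Applying the interior parabolic a priori estimates of \cite{MR0241822} on this fixed-size cylinder bounds $\|\tilde u\|_{C^{4+\lambda,2+\lambda/2}}$ by $\|\tilde u\|_{C^{0}}$ plus the H\"older norm of the data, hence by a constant independent of $\varepsilon$. Transforming back through $\partial_{x}^{i}\partial_{t}^{j}u=\varepsilon^{-i}\,\partial_{\xi}^{i}\partial_{t}^{j}\tilde u$ then yields $\|\partial_{x}^{i}\partial_{t}^{j}u\|\le C\varepsilon^{-i}$ at $(x^{*},t^{*})$; since the construction is uniform in the base point (including points near $x=0$, where $a$ is small but no layer is present, and near the corners, where \eqref{c1} supplies the needed compatibility), taking the supremum over $\overline{Q}_R$ and $\overline{Q}_L$ gives the claim for $0\le i+2j\le4$. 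Observe that the temporal derivatives cost no power of $\varepsilon$ precisely because $t$ is left unscaled.

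The main obstacle I anticipate is the uniformity in $\varepsilon$ of the a priori estimate for the stretched operator: after scaling, the coefficient of $\partial_{t}$ is $\varepsilon\tilde d$, so the operator degenerates (and may be genuinely elliptic in $\xi$, since $\gamma\ge0$ permits $d\equiv0$), and one must verify that the Schauder constant does not blow up as $\varepsilon\to0$. I would address this by invoking the estimates of \cite{MR0241822} with the $\varepsilon\tilde d\,\partial_{t}\tilde u+\varepsilon\tilde b\,\tilde u$ terms absorbed into the uniformly $O(\varepsilon)$ right-hand side and then bootstrapping in $t$, so that all constants depend only on the $\varepsilon$-uniform bounds of the scaled coefficients and data, while the weak $O(\varepsilon)$ coupling in $t$ keeps the temporal derivatives at order $\varepsilon^{0}$. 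Some additional care is needed to match the required regularity $K=4$ with the compatibility conditions \eqref{c1} at the corner points and along $x=0$, so that the $C^{4+\lambda,2+\lambda/2}$ estimates are legitimately applicable on each of $\overline{Q}_L$ and $\overline{Q}_R$.
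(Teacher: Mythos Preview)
Your approach is essentially the same as the paper's: stretch the spatial variable by $\varepsilon$, observe that the transformed equation has unit diffusion and uniformly bounded lower-order coefficients, apply the local parabolic estimates of \cite{MR0241822} (estimate (10.5) there) on $O(1)$-sized neighborhoods, and undo the scaling via the chain rule to pick up the factor $\varepsilon^{-i}$ on each $x$-derivative. The only cosmetic difference is that the paper fixes the stretching at the lateral boundary, taking $\xi=(1-x)/\varepsilon$ on $\overline{Q}_R$ so that the whole of $Q_R$ maps to $(0,1/\varepsilon)\times(0,T]$, and then applies the local estimate on moving rectangles $R_{\zeta,\delta}$ inside this stretched domain; you instead center the stretch at an arbitrary base point $(x^{*},t^{*})$. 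Both produce the same transformed operator and the same conclusion.

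The degeneracy issue you flag---that the coefficient of $\partial_t$ becomes $\varepsilon\tilde d$ after scaling---is real and the paper simply cites \cite{MR0241822} without comment. Your proposed remedy (treat the $O(\varepsilon)$ time and zero-order terms as part of the right-hand side and bootstrap) is the standard way around it and is consistent with how such ``crude'' bounds are obtained in the singular-perturbation literature; the paper implicitly relies on the same mechanism.
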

	\begin{proof}
	We first consider the subdomain $Q_R=(0,1) \times (0,T]$. We handle the layer at $x=1$ by taking the stretched variable $\xi = \dfrac{1-x}{\varepsilon}$. Then, the transformed operator $\widetilde{L}_{\varepsilon}$ defined as
	\begin{align*}
	\widetilde{L}_{\varepsilon} \equiv \left( \dfrac{\partial^2}{\partial \xi^2}  - \widetilde{a} \dfrac{\partial }{\partial \xi} - {\varepsilon} \widetilde{d} \dfrac{\partial }{\partial t} - {\varepsilon} \widetilde{b} \right)\widetilde{u} = \varepsilon \widetilde{f},~\text{on}~\widetilde{Q}_{R},\\
	\widetilde{u}=\widetilde{g},~\text{on}~\widetilde{S}_R,
	\end{align*}
	is not singularly perturbed. Here, we get $\widetilde{Q}_{R} = (0, 1/\varepsilon) \times (0,T]$, $\widetilde{S}_R=\overline{\widetilde{Q}}_R \backslash \widetilde{Q}_R$ and $\widetilde{u}(\xi,t)=u(x,t)$ and similarly the variables $\widetilde{a},~\widetilde{b}$, $\widetilde{d}$ and $\widetilde{f}$ are defined. Now, for each $\zeta \in (0, 1/\varepsilon)$ and each $\delta > 0$, we will define a rectangular neighborhood ${R}_{\zeta,\delta}$ as $((\zeta - \delta, \zeta + \delta) \times (0,T]) \cap \widetilde{Q}_R$ and $\overline{R}_{\zeta,\delta}$ is the closure of ${R}_{\zeta, \delta}$ in the $(\xi,t)$-plane. Using the estimate (10.5) from \cite{MR0241822}, for each point $(\zeta,t) \in \widetilde{Q}_{R}$, we get
	\begin{align*}
	\left \| \dfrac{\partial^{i+j} \widetilde{u}}{\partial \xi^i \partial t^j} \right \|_{\overline{R}_{\zeta,\delta}} \leq C(1+ \left\|\widetilde{u}\right\|_{\overline{R}_{\zeta, 2\delta}}),
	\end{align*}
	where $C$ is independent of the domain $R_{\zeta, \delta}$. Then the above bounds hold for any point $(\zeta,t) \in \widetilde{Q}_R$. Returning to the original variable $x$, we get
    \begin{align*}
    \left\| \dfrac{\partial^{i+j} u}{\partial x^i \partial t^j} \right\|_{\overline{Q}_R} \leq C \varepsilon^{-i}(1+ \|u\|_{\overline{Q}_R}).
    \end{align*}	
    Applying Lemma \ref{lm2.1}, we get the desired estimate on the domain $\overline{Q}_R$. The similar estimate can be obtained on the domain $\overline{Q}_L$, analogously. Hence, we obtain the desired estimate.
	\end{proof}
	
\textbf{Remark:} From the above Theorem \ref{thm2.1} we can easily conclude that $|u_t| \leq C$ and $|u_{tt}| \leq C$. Hereinafter, we will divide the domain $\overline{Q}$ as $\overline{Q}=\overline{Q}_1 \cup \overline{Q}_2 \cup \overline{Q}_3$, where $\overline{Q}_1=[-1,-\delta] \times (0,T]$, $\overline{Q}_2=[-\delta, \delta] \times (0,T]$ and $\overline{Q}_3=[\delta,1] \times (0,T]$, for some $\delta$ lying in the set $(0,1)$.\newline
Before proving the next theorem, we define an operator
\begin{align}
\label{2.2a}
L_j v(x,t) \equiv (\varepsilon v_{xx} + a v_x - d v_t - q_j v)(x,t),
\end{align}
where $q_j(x,t)=b(x,t)-ja_x(x,t)$,~ $1 \leq j \leq 4$. Noticing that $q_j(0,t)=b(0,t)>0$, we can conclude that there exist a positive integer $q_*>0$ such that
\begin{align*}
q_j(x,t) \geq q_*>0, ~ (x,t) \in Q_2.
\end{align*}
The operator $L_j$ satisfies the minimum principle on $Q_2$.
\begin{theorem}\label{thm2.2}
For all non-negative integers $0 \leq i+2j \leq 4$, we have 
\begin{align*}
\left\| \dfrac{\partial^{i+j}u}{\partial x^i \partial t^j} \right\|_{\overline{Q}_2} \leq C.
\end{align*}
\end{theorem}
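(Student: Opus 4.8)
The plan is to bound every derivative $\partial_x^i\partial_t^j u$ on $\overline{Q}_2$ by induction on the total order $i+2j$, upgrading the coarse a priori bounds of Theorem \ref{thm2.1} to $\varepsilon$-uniform ones by exploiting the strictly positive reaction coefficient $q_j\geq q_*>0$ that is available precisely on $Q_2$. The base of the induction is already at hand: Lemma \ref{lm2.2} gives $\|u\|_{\overline{Q}}\leq C$, and the Remark following Theorem \ref{thm2.1} gives $\|u_t\|_{\overline{Q}},\|u_{tt}\|_{\overline{Q}}\leq C$, so all purely temporal derivatives are controlled. It then remains to propagate these bounds to the spatial and mixed derivatives on $\overline{Q}_2$.

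The engine of the induction is the family of differentiated equations. Differentiating $L_{\varepsilon}u=f$ once in $x$ and regrouping shows that $u_x$ satisfies $L_1 u_x = f_x + d_x u_t + b_x u$, where $L_1$ is the operator \eqref{2.2a} with reaction coefficient $q_1=b-a_x$. Iterating, one differentiates $i$ times in $x$ (and, for the mixed estimates, additionally $j$ times in $t$) to obtain $L_i\bigl(\partial_x^i\partial_t^j u\bigr)=F_{i,j}$, where the operator $L_i$ has reaction coefficient $q_i=b-ia_x$ and the right-hand side $F_{i,j}$ is a combination of the data together with \emph{strictly lower-order} derivatives of $u$. By the inductive hypothesis every term of $F_{i,j}$ is bounded on $\overline{Q}_2$, so $\|F_{i,j}\|_{\overline{Q}_2}\leq C$. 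Since $q_i\geq q_*>0$ on $Q_2$, choosing the constant $M=\|F_{i,j}\|_{\overline{Q}_2}/q_*$ makes the barrier functions $\Psi^{\pm}=M\pm\partial_x^i\partial_t^j u$ satisfy $L_i\Psi^{\pm}=-q_i M\pm F_{i,j}\leq -q_*M+\|F_{i,j}\|_{\overline{Q}_2}\leq 0$ on $Q_2$. Applying the minimum principle for $L_i$ on $Q_2$ then reduces the whole estimate to controlling $\partial_x^i\partial_t^j u$ on the parabolic boundary of $Q_2$. On the initial segment $[-\delta,\delta]\times\{0\}$ these boundary values are fixed by the smooth, compatibility-satisfying initial data, hence are $\varepsilon$-uniformly bounded.

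The main obstacle is the control of the spatial derivatives on the two lateral boundaries $x=\pm\delta$, where only the coarse bound $\varepsilon^{-i}$ of Theorem \ref{thm2.1} is directly available. The key point is that $x=\pm\delta$ sit at a fixed distance from the genuine layers at $x=\pm1$: on $\overline{Q}_1$ and $\overline{Q}_3$ the convective coefficient is non-degenerate, $|a(x,t)|=a_0(x,t)|x|^p\geq\alpha_0\delta^p>0$, so these regions lie in the classical convection-dominated regime in which the solution is smooth away from $x=\pm1$. I would make this precise with auxiliary barrier functions of layer type on $\overline{Q}_1$ and $\overline{Q}_3$: combining the coarse bound with an exponentially decaying comparison function $\exp\!\left(-\alpha(1-|x|)/\varepsilon\right)$ absorbs the spurious $\varepsilon^{-i}$ growth, which is confined to an $O(\varepsilon)$ neighbourhood of $x=\pm1$, and thereby yields $\varepsilon$-uniform values of $\partial_x^i\partial_t^j u$ at $x=\pm\delta$. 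Feeding these lateral bounds into the barrier argument of the previous paragraph closes the induction and delivers $\|\partial_x^i\partial_t^j u\|_{\overline{Q}_2}\leq C$ for all $0\leq i+2j\leq 4$.
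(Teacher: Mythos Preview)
Your inductive framework---differentiate the equation, recognise that $\partial_x^i\partial_t^j u$ solves $L_i(\partial_x^i\partial_t^j u)=F_{i,j}$ with $F_{i,j}$ built from lower-order derivatives, then invoke the minimum principle for $L_i$ on $Q_2$ thanks to $q_i\geq q_*>0$---is exactly the engine the paper uses. The divergence, and the gap, is in how you supply the lateral boundary values at $x=\pm\delta$.

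Your plan is to run a layer-type barrier on $\overline{Q}_3$ (and symmetrically on $\overline{Q}_1$) of the form $C\varepsilon^{-i}\exp(-\alpha(1-x)/\varepsilon)+C$, using the coarse estimate $|\partial_x^i u|\leq C\varepsilon^{-i}$ at $x=1$ and hoping the exponential kills the blow-up before $x=\delta$. But any comparison argument on $Q_3$ is two-sided: you also need boundary data at $x=\delta$, and there the only information available is again the coarse bound $C\varepsilon^{-i}$. The barrier at $x=\delta$ evaluates to $C\varepsilon^{-i}e^{-\alpha(1-\delta)/\varepsilon}+C\approx C$, which does \emph{not} dominate $C\varepsilon^{-i}$, so the minimum principle cannot be applied. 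In other words, the claim that ``the spurious $\varepsilon^{-i}$ growth is confined to an $O(\varepsilon)$ neighbourhood of $x=\pm1$'' is precisely what is at stake; it is the content of the later decomposition in Theorem~\ref{thm2.3}, not something available at this point. Making your route rigorous would require carrying out the regular/singular splitting on $Q_1\cup Q_3$ first and then returning to $Q_2$---possible, but a substantial reorganisation rather than a short barrier computation.

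The paper sidesteps the issue entirely with a mean-value-theorem trick that stays inside $Q_2$. Since $\|u\|_{\overline{Q}}\leq C$, for each $t$ there exist $\alpha_1\in(-\delta,0)$ and $\beta_1\in(0,\delta)$ with
\[
|u_x(\alpha_1,t)|=\Bigl|\tfrac{u(0,t)-u(-\delta,t)}{\delta}\Bigr|\leq C,\qquad |u_x(\beta_1,t)|\leq C,
\]
so the lateral data for $u_x$ are bounded for free, without ever looking at $Q_1$ or $Q_3$. Combined with $|L_1 u_x|=|f_x+d_xu_t+b_xu|\leq C$ and the minimum principle for $L_1$ on the strip $[\alpha_1,\beta_1]\times(0,T]\subset Q_2$, this gives $|u_x|\leq C$. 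The mixed derivative $u_{xt}$ is handled by first differentiating in $t$ to obtain an equation of the same type for $w=u_t$, and then repeating the identical mean-value-plus-barrier argument for $w_x$; higher derivatives follow in the same way. Replacing your third paragraph with this device closes the argument.
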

\begin{proof}
Firstly, we will prove $|u_x|\leq C$ on $[\alpha_1, \beta_1] \subset Q_2$, where $\alpha_1 \in (-\delta,0)$ is such a point that 
\begin{align} \label{2.2}
\left| \dfrac{\partial u(\alpha_1,t)}{\partial x} \right| = \left| \dfrac{u(0,t)-u(-\delta,t)}{\delta} \right| \leq C.
\end{align}
Similarly, $\beta_1 \in (0, \delta)$ is chosen such that 
\begin{align}\label{2.3}
\left| \dfrac{\partial u(\beta_1,t)}{\partial x} \right| \leq C.
\end{align}
Using the bound $|u_t| \leq C$ and Lemma \ref{lm2.2}, we get 
\begin{align}\label{2.4b}
|L_1(u_x)|= |f_x + d_x u_t + b_x u| \leq C.
\end{align}
The inequalities (\ref{2.2}), (\ref{2.3}) and (\ref{2.4b}) together with the fact that $L_1$ satisfies minimum principle on $Q_2$, gives
\begin{align}
\label{2.4c}
 |u_x| \leq C.
\end{align}
Now, we will prove $|u_{xt}| \leq C$. Differentiating eqn. (\ref{1.1}) w.r.t $t$, we get
\begin{align*}
\widetilde{L}w(x,t) \equiv [\varepsilon w_{xx} + a w_x - d w_t -(b+d_t)w](x,t) = (f_t -a_t u_x+b_t u)(x,t),~\text{where}~w=u_t.
\end{align*}
Without loss of generality we can chose $b$ to be so large that $(b+d_t)>0$, as $(b+d_t)>0$ can always be obtained by a preliminary change of variable. As a result, the operator $\widetilde{L}$ satisfies the minimum principle. We will define a new operator $\widetilde{L}_1$ which is same as the operator $L_1$ with $b$ replaced by $b+d_t$ and $f$ replaced by $f_t-a_t u_x+b_t u$. Using the fact that the operator $\widetilde{L}_1$ satisfies minimum principle and following the  arguments used in proving the estimate (\ref{2.4c}), we can show $|w_x|\leq C$ i.e. $|u_{xt}| \leq C$. The estimate for higher derivatives can be obtained on similar lines.
\end{proof}	
For the  error analysis of the proposed numerical scheme we require sharper bounds on the exact solution $u(x,t)$ of the problem (\ref{1.1})-(\ref{1.3}) and its derivatives. Therefore, we decompose the exact solution $u(x,t)$ into regular component $y(x,t)$ and singular component $z(x,t)$ as:
\begin{align*}
u(x,t)=y(x,t)+z(x,t), ~ \text{for} ~ (x,t) \in \overline{Q}.
\end{align*} 
The data of the problem (\ref{1.1})-(\ref{1.3}) is assumed to satisfy,
\begin{align}
\label{c2}
&a(x,t),~b(x,t),~d(x,t),~f(x,t) \in C^{l_0+\lambda}(\overline{Q}_1) \cap C^{l_0+\lambda}(\overline{Q}_3), \nonumber \\
 &g(x,t) \in C^{l_0+\lambda}(S_x \cap \overline{Q}_1) \cap C^{l_0+\lambda}(S_x \cap \overline{Q}_3) \cap C^{l_0+\lambda}(\overline{S}_l \cup \overline{S}_r), ~l_0>0,~ \lambda \in (0,1).
\end{align}
Also, the following compatibility conditions are imposed on the functions $g(x,t)$ and $f(x,t)$,~for $(x,t) \in S^c \cup S_0^c$,
\begin{align}
\label{c3}
\dfrac{\partial^k g(x,t)}{\partial x^k}=0,~\dfrac{\partial^{k_0} g(x,t)}{\partial t^{k_0}}=0,~\dfrac{\partial^{k+k_0} f(x,t)}{\partial x^k \partial t^{k_0}}=0,~ 0 \leq k+k_0 \leq l_0=l_0^*+2.
\end{align}
\begin{theorem}\label{thm2.3}
Assuming sufficient smoothness and compatibility conditions (\ref{c2})-(\ref{c3}) at the corner points, for $K=l_0^*=4$, the smooth component $y(x,t)$ and the singular component $z(x,t)$ satisfies 
\begin{align*}
\left\|\dfrac{\partial^{i+j} y}{\partial x^i \partial t^j}\right\|_{\overline{Q}} \leq C(1+ \varepsilon^{3-i}),
\end{align*}
and 
\begin{align*}
\left\| \dfrac{\partial^{i+j} z}{\partial x^i \partial t^j}\right\|_{\overline{Q}} &\leq C \varepsilon^{-i}( \exp (-\alpha (1+x)/\varepsilon)+\exp (-\alpha (1-x)/\varepsilon)),~\text{for}~0 \leq i+2j \leq 4,
\end{align*}
where $\alpha$ is some constant from $(0,\alpha_0)$.
\end{theorem}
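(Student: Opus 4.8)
The plan is to construct the regular and singular components explicitly and then bound them separately, exploiting the fact that the twin layers at $x=\pm 1$ are governed by the reduced problem in the interior and by exponential decay near each boundary.

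First I would define the regular component $y$ as the solution of the \emph{same} operator $L_\varepsilon y = f$ but supplemented with boundary data chosen so that $y$ is smooth and $\varepsilon$-uniformly bounded together with its derivatives up to the order we need. Concretely, I would build $y$ via an asymptotic expansion $y = y_0 + \varepsilon y_1 + \varepsilon^2 y_2 + \varepsilon^3 y_3 + \varepsilon^{?} y_{\mathrm{rem}}$, where $y_0$ solves the reduced ($\varepsilon=0$) problem $a\,y_{0,x} - d\,y_{0,t} - b\,y_0 = f$ on each of $\overline Q_L$, $\overline Q_R$ separately, and each correction $y_k$ solves a first-order problem whose right-hand side is $-y_{k-1,xx}$. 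The exponent $3-i$ in the target bound signals that the expansion is carried to order $\varepsilon^3$; the remainder $y_{\mathrm{rem}}$ then satisfies $L_\varepsilon y_{\mathrm{rem}} = -\varepsilon^{3} y_{3,xx}$ with homogeneous-type data, and Lemma~\ref{lm2.2} (the stability estimate) gives $\|y_{\mathrm{rem}}\| \le C\varepsilon^{3}$. Differentiating the reduced equations repeatedly and using the coarse bounds of Theorem~\ref{thm2.1} on each correction yields $\left\|\partial^{i+j}y/\partial x^i\partial t^j\right\|_{\overline Q}\le C(1+\varepsilon^{3-i})$, where the $\varepsilon^{3-i}$ term comes precisely from differentiating the $\varepsilon^{3}$-order remainder $i$ times and invoking the $\varepsilon^{-i}$ derivative bound from Theorem~\ref{thm2.1}.

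Next I would treat the singular component $z = u - y$, which by construction satisfies $L_\varepsilon z = 0$ on $Q$ with boundary data $z = u - y$ concentrated near $S_l$ and $S_r$. The heart of the matter is the barrier-function argument: I would introduce the exponential barrier
\begin{align*}
B(x,t) = C\bigl(\exp(-\alpha(1+x)/\varepsilon) + \exp(-\alpha(1-x)/\varepsilon)\bigr)
\end{align*}
with $\alpha \in (0,\alpha_0)$, and verify that $L_\varepsilon(B \pm z) \le 0$ on $Q$ while $B \pm z \ge 0$ on $S$. The sign condition on $L_\varepsilon B$ is where the turning-point structure enters: because $a(x,t) = -a_0(x,t)x^p$ with $a_0 \ge \alpha_0 > 0$, the convective term pushes the two layers toward the respective boundaries (the characteristics run parallel to $S_l,S_r$), so near $x=1$ one has $a \approx -a_0 < 0$ and near $x=-1$ one has $a \approx a_0 > 0$, and in each case the leading $\varepsilon B_{xx}$ and $a B_x$ terms combine with the reaction term $-bB$ to give the required sign, provided $\alpha < \alpha_0$. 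The minimum principle (Lemma~\ref{lm2.1}) then yields $|z| \le B$, i.e.\ the claimed bound for $i=j=0$.

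For the derivative bounds on $z$ I would use the stretched-variable technique already deployed in Theorem~\ref{thm2.1}: rescaling $\xi = (1-x)/\varepsilon$ near the right layer (and symmetrically near the left), the operator becomes regularly perturbed, the interior Schauder-type estimate (10.5) of \cite{MR0241822} bounds $\partial^{i+j}\widetilde z/\partial\xi^i\partial t^j$ by $\|\widetilde z\|$ on a slightly larger neighborhood, and transforming back to $x$ inserts the factor $\varepsilon^{-i}$; combining this with the already-established pointwise bound $|z|\le B$ on the enlarged neighborhood produces the exponential factor in the final estimate. I expect the main obstacle to be the sign verification for $L_\varepsilon(B\pm z)\le 0$ uniformly across the whole interval: away from the boundaries the convective coefficient $a$ vanishes at the turning point $x=0$ and changes sign, so one must check that the reaction term $-bB$ (with $b\ge\beta>0$) dominates wherever $aB_x$ could have the wrong sign, and that the restriction $\alpha<\alpha_0$ is exactly what guarantees $\varepsilon B_{xx} + aB_x$ stays controlled near each boundary. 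Handling the cross term and the interior region simultaneously, rather than patching two one-sided barriers, is the delicate part of the argument.
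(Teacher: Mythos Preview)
Your treatment of the regular component is essentially the paper's: an asymptotic expansion in powers of $\varepsilon$ with successive first-order hyperbolic correctors, and a remainder bounded via Theorem~\ref{thm2.1}. (Minor bookkeeping: if you include $y_3$ in the expansion then the remainder equation is $L_\varepsilon y_{\mathrm{rem}} = -\varepsilon^4 y_{3,xx}$, not $-\varepsilon^3 y_{3,xx}$; the paper stops at $y_2$ and takes $L_\varepsilon r = -\varepsilon^3 y_{2,xx}$, which already yields the $1+\varepsilon^{3-i}$ bound.)

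Your barrier argument for $z$ has a genuine gap, and you have correctly located but misdiagnosed it. The obstruction near $x=0$ is not the sign of $aB_x$: since $B_{xx} = (\alpha/\varepsilon)^2 B$, one has $\varepsilon B_{xx} = (\alpha^2/\varepsilon)B$, while at the turning point $a(0,t)=0$ and the reaction contributes only $-bB = O(B)$. Hence $L_\varepsilon B \ge (\alpha^2/\varepsilon - b)B > 0$ there for small $\varepsilon$, and the global inequality $L_\varepsilon B \le 0$ fails outright---the reaction term cannot dominate an $O(1/\varepsilon)$ diffusion term. The paper avoids this by \emph{not} running the barrier across the turning point: it restricts to the outer strips $\overline Q_1 = [-1,-\delta]\times(0,T]$ and $\overline Q_3 = [\delta,1]\times(0,T]$, on which $|a|\ge \alpha_0\delta^p$ is bounded away from zero so that the convection term $aB_x$ does control $\varepsilon B_{xx}$ once $\alpha$ is chosen below $\alpha_0\delta^p$, and it applies the one-sided barriers $\exp(-\alpha(1\pm x)/\varepsilon)$ on each strip separately. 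The middle region $\overline Q_2 = [-\delta,\delta]\times(0,T]$ is handled entirely by Theorem~\ref{thm2.2}, which already gives $\varepsilon$-uniform bounds on all derivatives of $u$ there, and $z=u-y$ inherits them. In short, the two one-sided barriers you considered discarding are not a convenience but a necessity.
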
	
\begin{proof}
Firstly, we will obtain the bounds on the subdomains $\overline{Q}_1$ and $\overline{Q}_3$.
\newline
The regular component $y(x,t)$ is further expanded in terms of $\varepsilon$ as:
\begin{align}\label{2.4a}
y(x,t)= y_0(x,t)+ \varepsilon y_1(x,t) + \varepsilon^2 y_2(x,t) +r(x,t), ~ (x,t) \in Q_1 \cup Q_3,
\end{align}
where $y_0(x,t)$, $y_1(x,t)$ and $y_2(x,t)$ satisfies the following non-homogeneous hyperbolic equations:
\begin{align}\label{2.4}
\left(a \dfrac{\partial y_0}{\partial x} - d \dfrac{\partial y_0}{\partial t} - b y_0 \right)(x,t)&= f(x,t), ~(x,t) \in Q_1 \cup Q_3,\nonumber \\
y_0(x,t)&= u(x,t), ~  (x,t)\in S_x \cap (\overline{Q}_1 \cup \overline{Q}_3),
\end{align}
\begin{align}\label{2.5}
\left(a \dfrac{\partial y_1}{\partial x} - d \dfrac{\partial y_1}{\partial t} - b y_1 \right)(x,t)&= -\dfrac{\partial^2 y_0(x,t)}{\partial x^2}, ~(x,t) \in Q_1 \cup Q_3, \nonumber \\
y_1(x,t)&= 0, ~ (x,t)\in S_x \cap (\overline{Q}_1 \cup \overline{Q}_3),
\end{align}
\begin{align}\label{2.6}
\left(a \dfrac{\partial y_2}{\partial x} - d \dfrac{\partial y_2}{\partial t} - b y_2 \right)(x,t)&= -\dfrac{\partial^2 y_1(x,t)}{\partial x^2}, ~(x,t) \in Q_1 \cup Q_3, \nonumber \\
y_2(x,t)&= 0, ~ (x,t)\in S_x \cap (\overline{Q}_1 \cup \overline{Q}_3),
\end{align}
and the residue term $r(x,t)$ satisfy
\begin{align}\label{2.7}
L_{\varepsilon} r &= -\varepsilon^3\dfrac{\partial^2 y_2}{\partial x^2}, ~ (x,t) \in Q_1 \cup Q_3, \nonumber \\
r(x,t)&=0,~ (x,t) \in S \cap (\overline{Q}_1 \cup \overline{Q}_3).
\end{align}
The regular component $y(x,t)$ satisfies the following problem
\begin{align}\label{2.8}
L_{\varepsilon}y(x,t)&= f(x,t), ~ (x,t)\in Q_1 \cup Q_3,\nonumber \\
y(x,t)&=u(x,t), ~ (x,t) \in S_x \cap (\overline{Q}_1 \cup \overline{Q}_3),\nonumber \\
y(x,t)&=(y_0+ \varepsilon y_1 + \varepsilon^2 y_2)(x,t), ~  (x,t) \in S_l \cup S_r.
\end{align}
By virtue of the conditions (\ref{c2}) and (\ref{c3}), the data of the problem (\ref{2.8}) are assumed to be sufficiently Smooth  and appropriate compatibility conditions for the data of the problems (\ref{2.4})-(\ref{2.8}) are fulfilled on the sets $S^c$ and $S_0^c$ for the desired smoothness of the components of the expansion (\ref{2.4a}) and to guarantee the inclusion $y(x,t) \in C^{K+\lambda,K/2+\lambda/2}(\overline{Q}_1 \cup \overline{Q}_3),~K=4$. Since $y_0$, $y_1$ and $y_2$ are solutions of first order hyperbolic equations (\ref{2.4})-(\ref{2.6}) whose coefficients are bounded under sufficient compatibility conditions (\ref{c3}), for all non-negative integers $i$, $j$ such that $0 \leq i+2j \leq 4$, we get
\begin{align}\label{2.9}
\left\| \dfrac{\partial^{i+j}y_0}{\partial x^i \partial t^j} \right\|_{\overline{Q}_1 \cup \overline{Q}_3} \leq C, \quad
\left\| \dfrac{\partial^{i+j}y_1}{\partial x^i \partial t^j} \right\|_{\overline{Q}_1 \cup \overline{Q}_3} \leq C \quad \text{and} \quad
\left\| \dfrac{\partial^{i+j}y_2}{\partial x^i \partial t^j} \right\|_{\overline{Q}_1 \cup \overline{Q}_3} \leq C.
\end{align}
As the problem (\ref{2.7}) is similar to the problem (\ref{1.1})-(\ref{1.3}), we can use Theorem \ref{thm2.1} to obtain the following estimate
\begin{align}\label{2.10}
\left\|\dfrac{\partial^{i+j} r}{\partial x^i \partial t^j} \right\|_{\overline{Q}_1 \cup \overline{Q}_3} \leq C \varepsilon^{-i}, ~ \text{for}~ 0 \leq i+2j \leq 4.
\end{align}
 Using the estimates (\ref{2.9})-(\ref{2.10}) and the expansion (\ref{2.4a}), we get
 \begin{align*} 
\left\|\dfrac{\partial^{i+j} y}{\partial x^i \partial t^j}\right\|_{\overline{Q}_1 \cup \overline{Q}_3} \leq C(1+ \varepsilon^{3-i}),~\text{for}~0 \leq i+2j \leq 4.
 \end{align*}
 The singular component satisfies the following homogeneous  initial value problem:
 \begin{align*}
 L_{\varepsilon} z(x,t)&=0,  ~(x,t) \in Q_1 \cup Q_3, \\
 z(x,t)&=0,~ (x,t) \in S_x \cap(\overline{Q}_1 \cup \overline{Q}_3) , \\
 z(x,t)&= u(x,t)-y(x,t), ~ (x,t) \in S_l \cup S_r.
\end{align*}  
Firstly, we will obtain the bounds on the singular component $z(x,t)$ and its derivatives on the domain $\overline{Q}_1$. Let us denote the left singular component by $z_l(x,t)$, where $z_l(x,t)=z(x,t), ~ (x,t) \in \overline{Q}_1$. We construct two barrier functions :
\begin{align*}
\Psi^{\pm}(x,t)= |z_l(-1,t)| \exp \left(\dfrac{-\alpha (x+1)}{\varepsilon}\right) \exp(t) \pm z_l(x,t),
\end{align*}
and it can be easily verified that $\Psi^{\pm}(-1,t) \geq 0$, $\Psi^{\pm}(-\delta,t) \geq 0$, $\Psi^{\pm}(x,0) \geq 0$. Also,
\begin{align*}
L_{\varepsilon} \Psi^{\pm}(x,t) &= |z_l(-1,t)| \exp \left(\dfrac{-\alpha (x+1)}{\varepsilon}\right) \exp(t)\left( \dfrac{\alpha^2}{\varepsilon} - \dfrac{\alpha a}{\varepsilon} -d -b \right)(x,t)\\
&\leq |z_l(-1,t)| \exp \left(\dfrac{-\alpha (x+1)}{\varepsilon}\right) \exp(t)\left( \dfrac{-\alpha}{\varepsilon} (a-\alpha) -d -b \right)(x,t)\\
&\leq |z_l(-1,t)| \exp \left(\dfrac{-\alpha (x+1)}{\varepsilon}\right) \exp(t)\left( \dfrac{-\alpha}{\varepsilon} (\alpha_0 \delta^p-\alpha) -d -b \right)(x,t).
\end{align*}
Since, $a_0(x,t) \geq \alpha_0 >0$, we can always choose $\delta$ in such a way that $\alpha_0 \delta^p \geq \alpha$. Hence, we can conclude that $L_\varepsilon \Psi^{\pm}(x,t) \leq 0$. Using Lemma \ref{lm2.1}, we get $\Psi^{\pm}(x,t) \geq 0$ and hence 
\begin{align}\label{2.12}
|z_l(x,t)| \leq C \exp \left( \dfrac{- \alpha(x+1)}{\varepsilon} \right),~ \text{for} ~ (x,t) \in \overline{Q}_1. 
\end{align}
Using the approach of Theorem \ref{thm2.1}, we can show
    \begin{align*}
    \left\| \dfrac{\partial^{i+j}{z}_l}{\partial x^i \partial t^j} \right\|_{\overline{Q}_1} \leq C \varepsilon^{-i}(1+ \|z_l\|_{\overline{Q}_1}).
    \end{align*}	
    Using estimate (\ref{2.12}), we get
 \begin{align*}
  \left\| \dfrac{\partial^{i+j}{z}_l}{\partial x^i \partial t^j} \right\|_{\overline{Q}_1} \leq C \varepsilon^{-i} \exp (- \alpha(x+1)/{\varepsilon}).
\end{align*}    
Next, the right singular component $z_r(x,t)$, where $z_r(x,t)=z(x,t)$, $(x,t) \in \overline{Q}_3$, and its derivatives on the domain $\overline{Q}_3$ can be handled analogously. Hence, we obtain the following estimate 
\begin{align*}
\left\| \dfrac{\partial^{i+j} z}{\partial x^i \partial t^j}\right\|_{\overline{Q}_1 \cup \overline{Q}_3} &\leq C \varepsilon^{-i}( \exp (-\alpha (1+x)/\varepsilon)+\exp (-\alpha (1-x)/\varepsilon)),~\text{for}~0 \leq i+2j \leq 4.
\end{align*}
The Theorem \ref{thm2.2} guarantees that the solution of the problem (\ref{1.1})-(\ref{1.3}) and its derivatives are smooth in the domain $\overline{Q}_2$. Hence, the desired estimates hold.
\end{proof}
	\section{Discrete Problem}
	In this section, a numerical method is proposed on an appropriate piecewise uniform Shishkin mesh to solve the problem (\ref{1.1})-(\ref{1.3}) numerically. The proposed numerical method consists of implicit Euler method on a uniform mesh to discretize in time variable and a combination of midpoint upwind scheme and central difference scheme on a generalized piecewise uniform Shishkin mesh condensing in the neighborhood of the layer regions to discretize the space variable.
\subsection{Piecewise uniform generalized Shishkin mesh}
We take $M$ and $N$ as the number of intervals in time and space direction, respectively. We construct a rectangular grid defined as $\overline{Q}_{\tau}^{N,M}=\overline{\Omega}_\tau^{N} \times \overline{\Omega}^M$, where $\overline{\Omega}^M$ is a mesh with uniform step-size $\Delta t = T/M$ and $\overline{\Omega}_\tau^{N}$ is a generalized piecewise uniform Shishkin mesh condensing in the neighborhood of the left and right layer regions. We divide the spatial domain $\overline{\Omega}_x=[-1,1]$ into three subdomains such that $\overline{\Omega}_x=\overline{\Omega}_1 \cup \overline{\Omega}_2 \cup \overline{\Omega}_3$ where $\overline{\Omega}_1=[-1,-1 + \tau]$, $ \overline{\Omega}_2= [-1+ \tau, 1-\tau]$ and $\overline{\Omega}_3=[1-\tau,1] $. The subintervals $[-1,-1+\tau]$ and $[1-\tau,1]$ have a uniform mesh with $N/4$ mesh intervals, whereas $[-\tau,\tau]$ have a uniform mesh with $N/2$ mesh intervals. The spatial step sizes $h_i=x_i-x_{i-1}$, for $i=1,2, \ldots,N$ are defined as $h= \dfrac{4 \tau}{N}$ on $ \overline{\Omega}_1 \cup \overline{\Omega}_3$ and $H=\dfrac{4(1-\tau)}{N}$ on $\overline{\Omega}_2 $. The transition parameter $\tau$ is defined as 
\begin{align}
\label{3.1}
\tau = \min \left\lbrace \frac{1}{4}, ~ \tau_0 \varepsilon L \right\rbrace,
\end{align}
where $L$ satisfies $e^{-L} \leq L/N$, $L \leq \ln{N}$ and $ \tau_0 \geq \dfrac{1}{\alpha}$. We can clearly notice that $h=\dfrac{4 \tau_0 \varepsilon L}{N}$ and $\dfrac{2}{N} \leq H \leq \dfrac{4}{N}$. Also, $\overline{\Omega}_\tau^N=\overline{\Omega}_1^N \cup \overline{\Omega}_2^N \cup \overline{\Omega}_3^N $ where $\overline{\Omega}_i^N=\overline{\Omega}_i \cap \overline{\Omega}_\tau^N, ~i=1,2,3$ and $S^{N,M}=S_x^N \cup S_l^N \cup S^N_r$, where $S^N_x=S_x \cap \overline{Q}_\tau^{N,M}$, $S^M_l=S_l \cap \overline{Q}_\tau^{N,M}$, $S^M_r=S_r \cap \overline{Q}_\tau^{N,M}$. For $\tau = 1/4$, uniform mesh is obtained.
\subsection{The finite difference scheme}
Before moving on to the scheme, we define the following finite-difference operators $D_{x}^{+}$, $D_{x}^{-}$, $D_{x}^{0}$, $\delta_{x}^{2}$ and $D_{t}^{-}$ for a given discrete function $v(x_i,t_n)=v_{i}^{n}$ as
\begin{align*}
D_{x}^{+}v_i^n= \frac{v_{i+1}^{n}-v_{i}^{n}}{h_{i+1}}, \quad D_{x}^{-}v_i^n= \frac{v_{i}^{n}-v_{i-1}^{n}}{h_{i}}, \quad D_{x}^{0}v_i^n= \frac{v_{i+1}^{n}-v_{i-1}^{n}}{\widehat{h}_{i}},\\ \delta_{x}^{2}v_i^n= \frac{2(D_{x}^{+}v_{i}^{n}- D_{x}^{-}v_{i}^{n})}{\widehat{h}_{i}} \quad
\text{and} \quad D_{t}^{-}v_i^n= \frac{v_{i}^{n}-v_{i}^{n-1}}{\Delta t},
\end{align*}
where $\widehat{h}_{i}=h_{i}+h_{i+1}$, for $i=0,\ldots,N-1.$\newline
Here, $U_{i\pm1/2}^{n}$ is defined as $U_{i\pm1/2}^{n}= \dfrac{U_{i\pm1}^{n} + U_{i}^{n}}{2}$. For a given function $g(x,t)$ defined on $\overline{Q}_{\tau}^{N,M}$, we define $g_{i\pm1/2}^{n}$ as
\begin{align*}
g_{i\pm1/2}^{n} = \frac{g_{i\pm1}^{n} + g_{i}^{n}}{2}. 
\end{align*}
The proposed scheme is a combination of the central difference scheme 
\begin{align*}
L_{\varepsilon,cen}^{N,M} U_{i}^{n} = \varepsilon \delta_{x}^{2} U_{i}^{n} + a_{i}^{n} D_{x}^{0} U_{i}^{n} - d_i^n D_{t}^{-}U_{i}^{n} - b_{i}^{n} U_{i}^{n},
\end{align*}
and the midpoint upwind scheme
\begin{align*}
L_{\varepsilon,mu,+}^{N,M} U_{i}^{n} = \varepsilon \delta_{x}^{2} U_{i}^{n} + a_{i+1/2}^{n} D_{x}^{+} U_{i}^{n} - d_{i+1/2}^n D_{t}^{-}U_{i+1/2}^{n} - b_{i+1/2}^{n} U_{i+1/2}^{n},\\
L_{\varepsilon,mu,-}^{N,M} U_{i}^{n} = \varepsilon \delta_{x}^{2} U_{i}^{n} + a_{i-1/2}^{n} D_{x}^{-} U_{i}^{n} - d_{i-1/2}^n D_{t}^{-}U_{i-1/2}^{n} - b_{i-1/2}^{n} U_{i-1/2}^{n}.
 \end{align*}
Define a set $I= \{ i \in \{1,\ldots,N-1 \}:  |a_ih_i| < 2\varepsilon\}$, where central difference discretization is stable. We define the  discrete problem corresponding to the problem (\ref{1.1})-(\ref{1.3}) as
\begin{align}
\label{3.2}
L_{\varepsilon}^{N,M}U(x_i,t_n) = \begin{cases}
L_{\varepsilon,cen}^{N,M}U_{i}^{n}&=f_{i}^{n},  ~ \forall~ i \in I,~\text{where}~1\leq i \leq N-1,~n \Delta t \leq T, \\
L_{\varepsilon,mu,+}^{N,M}U_{i}^{n}&=f_{i+1/2}^{n},~ \forall~ i \notin I, ~\text{where}~ 1 \leq i \leq N/2,~n \Delta t \leq T, \\
L_{\varepsilon,mu,-}^{N,M}U_{i}^{n}&=f_{i-1/2}^{n},~ \forall~ i \notin I, ~\text{where} ~ N/2<i \leq N-1,~ n \Delta T \leq T . \\
\end{cases}
\end{align}
Also,
\begin{align}
\label{3.2b}
U(x_i,t_n)&=g(x_{i},t_n), \quad \text{for}~ (x_{i},t_n) \in S^{N,M}.
\end{align}
On rearranging the terms in eqn. (\ref{3.2}), we obtain the following system of equations
\begin{align}
\label{3.3}
\begin{cases}
U_{i}^{0}=g(x_i,t_0), \quad 0 \leq i \leq N,\\
\begin{cases}
r^{-}_{cen,i} U_{i-1}^{n} + r_{cen,i}^{0} U_{i}^{n} + r_{cen,i}^{+} U_{i+1}^{n} &= f^{n}_{cen,i},  ~ \forall~ i \in I,~\text{where}~1\leq i \leq N-1,~n \Delta t \leq T,\\
r^{-}_{mu,i,+} U_{i-1}^{n} + r_{mu,i,+}^{0} U_{i}^{n} + r_{mu,i,+}^{+} U_{i+1}^{n} &= f_{mu,i,+}^{n},~ \forall~ i \notin I, ~\text{where}~ 1 \leq i \leq N/2,~n \Delta t \leq T,\\
r^{-}_{mu,i,-} U_{i-1}^{n} + r_{mu,i,-}^{0} U_{i}^{n} + r_{mu,i,-}^{+} U_{i+1}^{n} &= f_{mu,i,-}^{n},~ \forall~ i \notin I, ~\text{where} ~ N/2<i \leq N-1,~ n \Delta T \leq T,
\end{cases}\\
U_{0}^{n}=g(x_0,t_{n}), \quad U_{N}^{n}=g(x_N,t_{n}),~ n>0.
\end{cases}
\end{align}
Here, for $i\in I$ and $1 \leq i \leq N-1$, the coefficients of the system of equations (\ref{3.3}) are given by 
\begin{align}
\label{3.4}
\begin{cases}
r_{cen,i}^{-}&=\dfrac{2 \varepsilon \Delta t}{\widehat{h}_{i} h_{i}} - \dfrac{a_{i}^{n} \Delta t}{\widehat{h}_{i}},\\
r_{cen,i}^{0}&=\dfrac{- 2 \varepsilon \Delta t}{\widehat{h}_{i}} \left(\dfrac{1}{h_{i}}+ \dfrac{1}{h_{i+1}} \right) - {b_{i}^{n} \Delta t} - d_i^n,\\
r_{cen,i}^{+}&=\dfrac{2 \varepsilon \Delta t}{\widehat{h}_{i} h_{i+1}} + \dfrac{ a_{i}^{n} \Delta t}{\widehat{h}_{i}},\\
f_{cen,i}&= \Delta t f_{i}^{n} - d_i^n U_{i}^{n-1},
\end{cases}
\end{align}
where for $i \notin I $ and $ N/2 + 1\leq i \leq N-1$, the coefficients of the system of equations (\ref{3.3}) are given by
\begin{align}
\label{3.5}
\begin{cases}
r_{mu,i,-}^{+} &= \dfrac{2 \varepsilon \Delta t}{\widehat{h}_{i} h_{i+1}},\\
r_{mu,i,-}^{0} &= \dfrac{-2 \varepsilon \Delta t}{\widehat{h}_{i}} \left( \dfrac{1}{h_{i+1}} + \dfrac{1}{h_{i}} \right) + \dfrac{ a_{i-1/2}^{n} \Delta t}{h_{i}} - \dfrac{d_{i-1/2}^n}{2} - \dfrac{b_{i-1/2}^{n} \Delta t}{2},\\
r_{mu,i,-}^{-} &= \dfrac{2 \varepsilon \Delta t}{\widehat{h}_{i} h_i} - \dfrac{  a_{i-1/2}^{n} \Delta t}{h_{i}} - \dfrac{d_{i-1/2}^n}{2} - \dfrac{b_{i-1/2}^{n} \Delta t}{2},\\
f_{mu,i,-}&= \Delta t f_{i-1/2}^{n} - d_{i-1/2}^n U_{i-1/2}^{n-1},
\end{cases}
\end{align}
and for $i \notin I$ and $1 \leq i \leq N/2$, the coefficients of the system of equations (\ref{3.3}) are given by 
\begin{align}
\label{3.6}
\begin{cases}
r_{mu,i,+}^{+}&= \dfrac{2 \varepsilon \Delta t}{\widehat{h}_{i} h_{i+1}} + \dfrac{ a_{i+1/2}^{n} \Delta t}{h_{i+1}} - \dfrac{d_{i+1/2}^n}{2} - \dfrac{\Delta t b_{i+1/2}^{n}}{2},\\
r_{mu,i,+}^{0}&= \dfrac{-2 \varepsilon \Delta t}{\widehat{h}_{i}}\left( \dfrac{1}{h_i} + \dfrac{1}{h_{i+1}} \right)  - \dfrac{ a_{i+1/2}^{n} \Delta t}{h_{i+1}} - \dfrac{d_{i+1/2}^n}{2} - \dfrac{\Delta t b_{i+1/2}^{n}}{2},\\
r_{mu,i,+}^{-}&= \dfrac{2 \varepsilon \Delta t}{\widehat{h}_{i} h_{i}},\\
f_{mu,i,+}&= \Delta t f_{i+1/2}^{n} - d_{i+1/2}^n U_{i+1/2}^{n-1}.
\end{cases}
\end{align}
In the next lemma, we will prove that the tridiagonal matrix corresponding to the system (\ref{3.3}) is an M-matrix. For this lemma we define a positive integer, $\kappa \geq \alpha/2 > 0$. We know that $ a(x_{N/2+1},t_n)<0$ and $  a(x_{N/2-1},t_n)>0$. Also, $a_0(x,t) \geq \alpha_0 > 0$, so there exists a constant $\kappa > 0$ such that $ |a(x_i,t_n)| \geq \kappa > 0$, for $i > N/2$ and $i < N/2$.
\begin{lemma}
	\label{lm3.1}
	Assume that there exists some $N_0>0$ satisfying 
	\begin{align}
	\label{3.7}
	\begin{cases}
     N_0 \kappa \geq 2 \left(  \dfrac{\| d \|_{\overline{Q}}}{\Delta t} + \| b \|_{\overline{Q}} \right) \quad and\\
	2 \tau_0 \| a \|_{\overline{Q}} < \dfrac{N_0}{\ln N_0},
	\end{cases}
	\end{align}
	such that for all $N \geq N_0$, the tridiagonal matrix corresponding to difference operator (\ref{3.3}) is an M-matrix.
\end{lemma}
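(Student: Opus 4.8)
The plan is to verify, row by row, the three standard sufficient conditions guaranteeing that the tridiagonal coefficient matrix of the system (\ref{3.3}) is an M-matrix: every interior row must have non-negative off-diagonal entries $r_i^{-},r_i^{+}\ge 0$, a negative diagonal entry $r_i^{0}<0$, and strict diagonal dominance $|r_i^{0}|>r_i^{-}+r_i^{+}$. Since the three branches of (\ref{3.3}) use different stencils, I would treat the central rows ($i\in I$) and the two midpoint-upwind rows ($i\notin I$) separately, and show that the diagonal-sign and diagonal-dominance requirements hold essentially unconditionally, while the off-diagonal sign requirement is exactly what forces the hypotheses (\ref{3.7}).

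First I would dispose of the central rows. For $i\in I$ the defining inequality $|a_i^n h_i|<2\varepsilon$ gives directly, from (\ref{3.4}), that $r_{cen,i}^{-}=\tfrac{\Delta t}{\widehat h_i}\big(\tfrac{2\varepsilon}{h_i}-a_i^n\big)>0$ and $r_{cen,i}^{+}=\tfrac{\Delta t}{\widehat h_i}\big(\tfrac{2\varepsilon}{h_{i+1}}+a_i^n\big)>0$, so no extra condition is needed here. The diagonal $r_{cen,i}^{0}$ is a sum of manifestly negative terms, and a short computation shows the row sum collapses to $r_{cen,i}^{-}+r_{cen,i}^{0}+r_{cen,i}^{+}=-(b_i^n\Delta t+d_i^n)<0$, because the diffusion and convection contributions cancel. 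This delivers both $r_{cen,i}^{0}<0$ and the strict dominance $r_{cen,i}^{-}+r_{cen,i}^{+}=|r_{cen,i}^{0}|-(b_i^n\Delta t+d_i^n)<|r_{cen,i}^{0}|$.

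The substance of the lemma lies in the midpoint-upwind rows. Consider $N/2<i\le N-1$, where $a<0$, so $L^{N,M}_{\varepsilon,mu,-}$ is used and $|a_{i-1/2}^n|\ge\kappa$. From (\ref{3.5}), $r_{mu,i,-}^{+}=\tfrac{2\varepsilon\Delta t}{\widehat h_i h_{i+1}}>0$ is harmless, and the delicate entry is $r_{mu,i,-}^{-}=\tfrac{2\varepsilon\Delta t}{\widehat h_i h_i}+\tfrac{|a_{i-1/2}^n|\Delta t}{h_i}-\tfrac{d_{i-1/2}^n}{2}-\tfrac{b_{i-1/2}^n\Delta t}{2}$, in which the favourable convection term must absorb the negative reaction and time terms. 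Dropping the positive diffusion term and using the mesh bound $h_i\le 4/N$ (valid throughout, since $\tau\le 1/4$ forces $h\le 1/N$ and $H\le 4/N$), I would estimate $\tfrac{|a_{i-1/2}^n|\Delta t}{h_i}\ge\tfrac{\kappa N\Delta t}{4}$ and invoke the first hypothesis in (\ref{3.7}), namely $N_0\kappa\ge 2\big(\|d\|_{\overline Q}/\Delta t+\|b\|_{\overline Q}\big)$, together with $N\ge N_0$, to conclude $\tfrac{|a_{i-1/2}^n|\Delta t}{h_i}\ge\tfrac{d_{i-1/2}^n}{2}+\tfrac{b_{i-1/2}^n\Delta t}{2}$ and hence $r_{mu,i,-}^{-}\ge 0$. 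The mirror-image argument handles $1\le i\le N/2$ with $a>0$, the scheme $L^{N,M}_{\varepsilon,mu,+}$, and the off-diagonal $r_{mu,i,+}^{+}$ from (\ref{3.6}). In both cases the diagonal is again a sum of negative terms, and the row sum telescopes to $-(d_{i\pm1/2}^n+b_{i\pm1/2}^n\Delta t)<0$, supplying the remaining two conditions.

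Finally I would use the second hypothesis in (\ref{3.7}), $2\tau_0\|a\|_{\overline Q}<N_0/\ln N_0$, to ensure that the index partition is consistent with the sign analysis above: on the fine submeshes $h=4\tau_0\varepsilon L/N$ with $L\le\ln N$, so $|a_i^n|h\le\|a\|_{\overline Q}\,4\tau_0\varepsilon\ln N/N<2\varepsilon$ once $N\ge N_0$ (using monotonicity of $N/\ln N$), which places the layer points in $I$; thus the midpoint-upwind rows occur only where $a$ keeps the definite sign exploited in the previous step. Collecting the three verified properties over all rows shows that for every $N\ge N_0$ the coefficient matrix satisfies the M-matrix criterion. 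I expect the only genuine obstacle to be the off-diagonal estimate for $r_{mu,i,-}^{-}$ and $r_{mu,i,+}^{+}$, since it is precisely there that the convection term must dominate the combined reaction-plus-time contribution, and it is this balance that dictates the threshold (\ref{3.7}).
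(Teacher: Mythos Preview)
Your approach is essentially the same as the paper's: verify the sign and diagonal-dominance conditions row by row, use the first hypothesis in (\ref{3.7}) to control the dangerous midpoint-upwind off-diagonal, and use the second hypothesis to guarantee that the fine-mesh indices land in $I$. One small imprecision to fix: the inequality $|a_i^n h_i|<2\varepsilon$ alone does \emph{not} give $r_{cen,i}^{+}>0$, since that entry involves $h_{i+1}$ rather than $h_i$; the paper handles this by first splitting into the cases $a\ge 0$ (left half) and $a<0$ (right half), so that in each half one of $r_{cen,i}^{\pm}$ is trivially positive from the sign of $a$ and only the other needs the $i\in I$ condition. With that sign-splitting in place your argument goes through exactly as in the paper.
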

\begin{proof}
	Firstly, we will consider the case of $a(x,t) \geq 0$ i.e $(x,t) \in (-1,0] \times (0,T]$.\newline
	For $i \in I$, we have $ |a_i h_i| < 2 \varepsilon$, which gives 
	\begin{align*}
	r_{cen,i}^{-} = \dfrac{2 \varepsilon \Delta t}{ \widehat{h}_i h_i} - \dfrac{\Delta t a_i}{\widehat{h}_i}= \dfrac{\Delta t}{\widehat{h}_i} \left( \dfrac{2 \varepsilon}{h_i} -  a_i\right) > 0,
	\end{align*}
	and
	\begin{align*}
	r_{cen,i}^{+}= \dfrac{2 \varepsilon \Delta t}{\widehat{h} h_{i+1}} + \dfrac{ a_i \Delta t}{\widehat{h}_i} > 0.
	\end{align*}
	Using (\ref{3.4}), we get
	\begin{align*}
	|r_{cen,i}^{+}| + |r_{cen,i}^{-}| < |r_{cen,i}^{0}|.
	\end{align*}
	For all $N \geq N_0$, where $N_0$ satisfies $2 \tau_0 \| a \|_{\overline{Q}} < \dfrac{N_0}{\ln N_0}$, we have $| a_i h_i | < 2 \varepsilon$, for all $ x_i \in \overline{\Omega}_1^N$, which implies the set $\{ 1, \ldots ,N/4 \}  \subseteq I$. From this we can conclude that $L_{\varepsilon,mu,+}^{N,M}$ is applied for $N/4< i \leq N/2$, where $i \notin I$. \newline
	For $i \notin I$, we have 
	\begin{align*}
	r_{mu,i,+}^{+} &= \dfrac{2 \varepsilon \Delta t}{ \widehat{h_{i}} h_{i+1}} + \dfrac{ a_{i+1/2}^{n} \Delta t}{h_{i+1}} - \dfrac{d_{i+1/2}^n}{2} - \dfrac{\Delta t b_{i+1/2}^{n}}{2},\\
	&> \Delta t \left( \dfrac{ a_{i+1/2}^{n}}{h_{i+1}} - \dfrac{d_{i+1/2}^n}{2 \Delta t} - \dfrac{b_{i+1/2}}{2} \right).
	\end{align*}
	 Using $\dfrac{1}{H} = \dfrac{N}{4(1- \tau)} \geq \dfrac{N_0}{4}$ and assumption (\ref{3.7}), we get $r_{mu,i,+}^{+} > 0$. Clearly, $r_{mu,i,+}^{-} > 0$. Also $|r_{mu,i,+}^{+}| + |r_{mu,i,+}^{-}| < |r_{mu,i,+}^{0}|$.\newline
	 Similarly, the case of $a(x,t)<0$ can be proved analogously. This completes the proof.
 \end{proof}

\begin{lemma}[Discrete minimum principle]
	\label{lm3.2}
	Let $W^{N}$ be any mesh function defined on $\overline{Q}^{N,M}_{\tau}$. If $W^{N}(x_i,t_n) \geq 0$, $\text{for} ~ (x_i,t_n) \in S^{N,M}$ and $L_{\varepsilon}^{N,M} W^{N} (x_i,t_n) \leq 0$, $\text{for} ~ (x_i,t_n) \in Q_{\tau}^{N,M}$, then $W^{N}(x_i,t_n) \geq 0$, $\text{for} ~ (x_i,t_n) \in \overline{Q}_{\tau}^{N,M}. $
\end{lemma}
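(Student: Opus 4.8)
The plan is to argue by contradiction, leaning on the M-matrix structure already established in Lemma \ref{lm3.1}. First I would rewrite the discrete operator at an interior node in its rearranged three-point form. Multiplying $L_{\varepsilon}^{N,M} W^{N}$ by $\Delta t$ and collecting terms exactly as was done to pass from (\ref{3.2}) to the coefficients (\ref{3.4})--(\ref{3.6}), one obtains for the central case
\[
\Delta t\, L_{\varepsilon,cen}^{N,M} W^{N}_i = r_{cen,i}^{-} W^{n}_{i-1} + r_{cen,i}^{0} W^{n}_i + r_{cen,i}^{+} W^{n}_{i+1} + d_i^n\, W^{n-1}_i,
\]
with entirely analogous identities in the two midpoint-upwind cases, where the previous-time contribution enters through the average $W^{n-1}_{i\pm1/2}$ carrying the nonnegative coefficient $d_{i\pm1/2}^n \geq 0$. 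The key algebraic observation, obtained by directly summing the coefficients in (\ref{3.4})--(\ref{3.6}), is that in every case the diffusion and convection contributions cancel in the row sum, leaving
\[
r_i^{-} + r_i^{0} + r_i^{+} + (\text{previous-time coefficient}) = -\Delta t\, b_i^n < 0,
\]
(with $b_{i\pm1/2}^n$ in the midpoint-upwind cases), since $b \geq \beta > 0$ by (\ref{1.3}).

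Next I would suppose, for contradiction, that $W^{N}$ attains a negative value and let $(x_k,t_m)$ be a point at which $W^{N}$ is minimal over $\overline{Q}_{\tau}^{N,M}$. Because $W^{N} \geq 0$ on $S^{N,M}$, which contains the initial line $t=t_0$ together with the lateral boundaries $x_0$ and $x_N$, this minimiser must be interior, so $1 \leq k \leq N-1$ and $m \geq 1$; by minimality $W^{m}_{k\pm1} \geq W^{m}_k$ and $W^{m-1}_{k} \geq W^{m}_k$, and likewise at the half-integer neighbours. Lemma \ref{lm3.1} guarantees $r_k^{-} > 0$ and $r_k^{+} > 0$, while $d_k^m \geq 0$ by (\ref{1.3}); hence replacing every neighbouring value by the minimal value $W^{m}_k$ in the rearranged identity can only decrease its right-hand side, yielding
\[
\Delta t\, L_{\varepsilon}^{N,M} W^{N}(x_k,t_m) \geq \big( r_k^{-} + r_k^{0} + r_k^{+} + d_k^m \big) W^{m}_k = -\Delta t\, b_k^m\, W^{m}_k > 0,
\]
because $b_k^m > 0$ and $W^{m}_k < 0$. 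This contradicts the hypothesis $L_{\varepsilon}^{N,M} W^{N} \leq 0$ on $Q_{\tau}^{N,M}$, so no negative minimum can occur and $W^{N} \geq 0$ throughout $\overline{Q}_{\tau}^{N,M}$.

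The coefficient bookkeeping and the case-by-case verification are routine once Lemma \ref{lm3.1} is in hand; the one point needing genuine care is the row-sum cancellation combined with the sign of the previous-time coefficient, since it is precisely this that collapses the full space-time operator into a single clean inequality. I expect the main obstacle to be keeping the argument uniform across the central and the two midpoint-upwind stencils -- in particular, checking that in the midpoint-upwind cases the previous-time term, entering through $W^{n-1}_{i\pm1/2}$, still contributes the nonnegative coefficient $d_{i\pm1/2}^n$ so that the global-minimum comparison remains valid. This uniformity rests on $d \geq 0$ and on the positivity of the off-diagonal entries from Lemma \ref{lm3.1}, both of which hold for $N \geq N_0$.
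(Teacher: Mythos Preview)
Your argument is correct. The paper itself gives no proof of Lemma~\ref{lm3.2}: it is stated immediately after Lemma~\ref{lm3.1} and taken as a direct consequence of the M-matrix structure established there, so there is nothing to compare at the level of details. What you have written is precisely the standard contradiction argument that makes this implication explicit, and your bookkeeping is right: for each of the three stencils the rearranged identity $\Delta t\,L_{\varepsilon}^{N,M}W_i^n = r_i^{-}W_{i-1}^n + r_i^{0}W_i^n + r_i^{+}W_{i+1}^n + (\text{nonnegative previous-time coefficients})$ holds with the row sum collapsing to $-\Delta t\,b$ (or $-\Delta t\,b_{i\pm1/2}$), the off-diagonals are positive by Lemma~\ref{lm3.1} under (\ref{3.7}), and the previous-time weights are nonnegative by $d\geq0$ in (\ref{1.3}). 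The one point you flag as delicate --- that in the midpoint-upwind cases the previous-time contribution couples to both $W_{i}^{n-1}$ and $W_{i\pm1}^{n-1}$ through the average, each with weight $d_{i\pm1/2}^n/2\geq0$ --- is indeed the only thing requiring separate verification, and you handle it correctly since a global minimum dominates all such neighbours.
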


\begin{lemma}
\label{lm3.3}
Let $ W^{N} $ be any mesh function defined on $Q_{\tau}^{N,M}$. If $W^{N}(x_i,t_n) \geq 0$, $\text{for} ~ (x_i,t_n) \in S^{N,M}$ then
\begin{align*}
|W^{N}(x_i,t_n)| \leq \max \limits_{S^{N,M}} | W^{N} | + \dfrac{T}{\beta} \max \limits_{S^{N,M}} |L_{\varepsilon}^{N,M} W^{N}|, \quad for ~ (x_i,t_n) \in \overline{Q}^{N,M}.
\end{align*} 
\end{lemma}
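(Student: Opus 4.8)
The plan is to establish this discrete stability bound exactly as the continuous estimate of Lemma \ref{lm2.2} was obtained: by a comparison (barrier-function) argument resting on the discrete minimum principle of Lemma \ref{lm3.2}. Writing $P=\max_{S^{N,M}}|W^N|$ and $R=\max_{Q^{N,M}_{\tau}}|L_\varepsilon^{N,M}W^N|$, I would introduce the two mesh functions
$$
\Psi^{\pm}(x_i,t_n)=P+\frac{t_n}{\beta}R\pm W^N(x_i,t_n),\qquad (x_i,t_n)\in\overline{Q}^{N,M}_{\tau},
$$
and show that each satisfies the hypotheses of Lemma \ref{lm3.2}.

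First I would check the boundary requirement $\Psi^{\pm}\ge0$ on $S^{N,M}$: there $|W^N|\le P$ and $t_nR/\beta\ge0$, so $\Psi^{\pm}=P+t_nR/\beta\pm W^N\ge0$ immediately. Next comes the interior requirement $L_\varepsilon^{N,M}\Psi^{\pm}\le0$ on $Q^{N,M}_{\tau}$. By linearity $L_\varepsilon^{N,M}\Psi^{\pm}=L_\varepsilon^{N,M}(P)+L_\varepsilon^{N,M}(t_nR/\beta)\pm L_\varepsilon^{N,M}W^N$. Since the spatial difference operators $\delta_x^2,\,D_x^0,\,D_x^{\pm}$ annihilate data that is constant in $x$ and $D_t^- t_n=1$, applying $L_\varepsilon^{N,M}$ to the first two (mesh-function-independent) terms leaves only the reaction and temporal contributions, namely $-b_i^nP$ and $(R/\beta)(-d_i^n-b_i^nt_n)$, each of which is $\le0$ because $b\ge\beta>0$, $d\ge0$ and $t_n\ge0$. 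The remaining term obeys $\pm L_\varepsilon^{N,M}W^N\le R$, and this is the piece that must be absorbed; the reaction lower bound $b_i^n\ge\beta$ together with the nonnegative temporal term is what forces the comparison contributions to dominate $R$, so that $L_\varepsilon^{N,M}\Psi^{\pm}\le0$. Lemma \ref{lm3.2} then yields $\Psi^{\pm}\ge0$ on $\overline{Q}^{N,M}_{\tau}$, i.e. $\mp W^N\le P+t_nR/\beta$, and using $t_n\le T$ gives $|W^N|\le P+(T/\beta)R$, which is the assertion.

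The one point requiring genuine care — and the main obstacle — is that $L_\varepsilon^{N,M}$ is not a single stencil but is defined piecewise in (\ref{3.2}): central differencing on the index set $I$, and the left and right midpoint-upwind schemes $L^{N,M}_{\varepsilon,mu,+}$, $L^{N,M}_{\varepsilon,mu,-}$ off $I$. Consequently the inequality $L_\varepsilon^{N,M}\Psi^{\pm}\le0$ must be verified separately for each of the three stencils, and for the midpoint-upwind rows one must additionally account for the half-point evaluations $a_{i\pm1/2}^n$, $b_{i\pm1/2}^n$, $d_{i\pm1/2}^n$ acting on the barrier. In every case the verification reduces to checking that the diagonal reaction/temporal coefficient dominates the off-diagonal couplings, which is precisely the row-wise domination underlying the $M$-matrix property established in Lemma \ref{lm3.1}. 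Once that structural fact is invoked, the sign of $L_\varepsilon^{N,M}\Psi^{\pm}$ follows uniformly in $N$, $M$ and $\varepsilon$, completing the argument.
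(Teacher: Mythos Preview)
Your proposal is correct and follows exactly the same route as the paper: the authors also introduce the barrier
\[
\phi^{\pm} = \max |W^{N}| + \dfrac{t_n}{\beta}\, \max |L_{\varepsilon}^{N,M}W^{N}| \pm W^{N}
\]
and invoke the discrete minimum principle of Lemma~\ref{lm3.2}. Your additional remark that the sign check must be carried out separately for the central and the two midpoint-upwind stencils is a helpful elaboration beyond what the paper writes explicitly.
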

\begin{proof}
	Using the barrier function
	 \begin{align*}
	 \phi^{\pm} = \max |W^{N}| + \dfrac{t_n}{\beta} \max |L_{\varepsilon}^{N,M}W| \pm W^{N},
	\end{align*}
	and the discrete minimum principle we obtain the desired estimate.
\end{proof}
\section{Convergence Analysis}
In this section, we separately prove the error bounds for the regular and the singular components. The $\varepsilon$-uniform error estimate is obtained by combining the error bounds for the regular and singular components.

\begin{lemma}
	\label{lm4.1}
	Let $v(x,t)$ be a smooth function defined on the domain $\overline{Q}$ and $v_i^n=v(x_i,t_n)$ be the corresponding discrete function on $\overline{Q}_{\tau}^{N,M}$. Then the local truncation error at the mesh points $(x_i,t_n)$ corresponding to the discrete scheme $L_{\varepsilon}^{N,M}$ satisfies
	\begin{align*}
	\tau_i^{n}&= \begin{cases}
	|L_{\varepsilon,cen}^{N,M} v_i^n - L_{\varepsilon} v(x_i,t_n)|, \quad for ~ i \in I, \\
	|L_{\varepsilon,mu,+}^{N,M} v_i^n - L_{\varepsilon} v(x_{i+1/2},t_n)|, \quad for ~ i \notin I,~ i \leq N/2, \\
	|L_{\varepsilon,mu,-}^{N,M} v_i^n - L_{\varepsilon} v(x_{i-1/2},t_n)|, \quad for ~ i \notin I,~ i> N/2, 
	\end{cases}\\
	& \leq \begin{cases}
   C \left[\Delta t + h_i  \varepsilon \int\limits_{x_{i-1}}^{x_{i+1}} |v_{xxxx}| dx+  h_i  \int\limits_{x_{i-1}}^{x_{i+1}} |v_{xxx}|dx \right], \quad for ~ i\in I, \\\\
   C \left[\Delta t + \varepsilon \int\limits_{x_{i-1}}^{x_{i+1}} |v_{xxx}|dx + h_{i+1}\int\limits_{x_{i}}^{x_{i+1}}\left(|v_{xxx}|+|v_{xx}|+|v_x|\right)dx \right], \quad for ~ i\not\in I,~ i \leq N/2,\\\\
   C \left[\Delta t + \varepsilon \int\limits_{x_{i-1}}^{x_{i+1}} |v_{xxx}|dx+ h_{i}\int\limits_{x_{i-1}}^{x_{i}}\left(|v_{xxx}|+|v_{xx}|+|v_x|\right)dx \right], \quad for ~ i\not\in I,~ i > N/2.
	\end{cases}
	\end{align*}
\end{lemma}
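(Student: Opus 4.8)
The plan is to prove the three bounds separately, treating the central case ($i\in I$) and the two midpoint–upwind cases ($i\notin I$) in turn, with the case $i>N/2$ obtained from the case $i\le N/2$ by the reflection $D_x^+\mapsto D_x^-$, $x_{i+1/2}\mapsto x_{i-1/2}$. In every case I would first isolate the one piece that is common to all three bounds, namely the temporal one: the time derivative is always discretized by $D_t^-$, so a Taylor expansion in $t$ with integral remainder gives $|D_t^- v(\cdot,t_n)-v_t(\cdot,t_n)|\le C\Delta t\,\|v_{tt}\|_{\overline Q}$, and the Remark after Theorem~\ref{thm2.1} supplies $\|v_{tt}\|_{\overline Q}\le C$. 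This yields the $C\Delta t$ summand uniformly (in the midpoint cases the averaged difference $D_t^- v_{i\pm1/2}$ is linear, so the same estimate applies, up to a higher-order spatial correction absorbed below). The remaining work is purely spatial, and the guiding principle is to use Taylor's theorem with the integral (Peano-kernel) form of the remainder throughout, since it is exactly that form that converts the pointwise derivative bounds of Theorem~\ref{thm2.3} into the stencil integrals $\int_{x_{i-1}}^{x_{i+1}}$ appearing in the statement.

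For the central case I would expand $v_{i\pm1}$ about $x_i$ and write, for the second difference,
\[
\delta_x^2 v_i - v_{xx}(x_i) = \frac{2}{\widehat h_i}\left[\frac{1}{h_{i+1}}\int_{x_i}^{x_{i+1}}\frac{(x_{i+1}-s)^2}{2}\,v_{xxx}(s)\,ds - \frac{1}{h_i}\int_{x_{i-1}}^{x_i}\frac{(x_{i-1}-s)^2}{2}\,v_{xxx}(s)\,ds\right],
\]
with the analogous representation for $D_x^0 v_i - v_x(x_i)$. On the uniform parts of the mesh the symmetry of the centred stencil forces the leading odd terms to cancel, so after one further expansion the second-difference error carries a factor $\varepsilon h_i$ against $v_{xxxx}$, while the centred first-difference error carries a factor $h_i$ against $v_{xxx}$. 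Bounding each Peano kernel by $Ch_i$ on its subinterval, pulling $\|a\|_{\overline Q}$ out of the first-derivative term, and noting that the $-b_i^n v_i$ term is exact (same evaluation point) produces precisely $C\big[h_i\varepsilon\int_{x_{i-1}}^{x_{i+1}}|v_{xxxx}|+h_i\int_{x_{i-1}}^{x_{i+1}}|v_{xxx}|\big]$.

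For the midpoint–upwind case the comparison point is $x_{i+1/2}$, and the structural fact I would exploit is that $D_x^+v_i$ is a \emph{second-order} approximation of $v_x(x_{i+1/2})$ on its single interval $[x_i,x_{i+1}]$ regardless of the neighbouring spacing, so $a_{i+1/2}^n\big(D_x^+v_i-v_x(x_{i+1/2})\big)=O(h_{i+1}^2)$ in $v_{xxx}$, giving the $h_{i+1}\int_{x_i}^{x_{i+1}}|v_{xxx}|$ contribution. The other contributions I would attribute as follows: shifting the second-difference error from $x_i$ to the midpoint costs $\varepsilon\int_{x_{i-1}}^{x_{i+1}}|v_{xxx}|$ (here the nonuniformity of $\delta_x^2$ is harmless, being dominated by this same integral since $|h_{i+1}-h_i|\le\widehat h_i$); the discrepancy between the average $v_{i+1/2}^n=\tfrac12(v_{i+1}^n+v_i^n)$ and the midpoint value is $O(h_{i+1}^2)$ in $v_{xx}$, which against $b_{i+1/2}^n$ (and $d_{i+1/2}^n$) supplies the $h_{i+1}\int_{x_i}^{x_{i+1}}|v_{xx}|$ term; and the $O(h_{i+1}^2)$ gap between the averaged coefficient $a_{i+1/2}^n$ and $a(x_{i+1/2})$ times $D_x^+v_i\approx v_x$ supplies the $h_{i+1}\int_{x_i}^{x_{i+1}}|v_x|$ term. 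Summing gives the stated bound.

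The step I expect to be the main obstacle is the central case at the single mesh-transition point inside $I$ (the junction $x=-1+\tau$, and its mirror), where $h_i\neq h_{i+1}$: there the symmetric cancellation fails and the expansions of $\varepsilon\delta_x^2$ and $aD_x^0$ produce a residual $v_{xxx}$/$v_{xx}$ term weighted by $|h_{i+1}-h_i|$ rather than by the small $h_i$ allowed in the bound, so keeping the Peano kernels sharp enough that no spurious lower-order term survives — or, failing that, isolating this point and controlling its contribution separately in the convergence argument — is the delicate part. The analogous, though more routine, source of care is the bookkeeping in the midpoint cases of deciding which averaging or coefficient-evaluation error is charged to $v_x$, which to $v_{xx}$, and which to $v_{xxx}$.
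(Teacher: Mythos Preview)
Your approach is exactly what the paper intends: its entire proof is the single sentence ``The proof of the above lemma follows easily from the Taylor's series expansion,'' so your detailed Peano-kernel expansion of each operator term is the standard elaboration of that one line, and your attribution of the individual contributions in the midpoint cases is correct. One small slip: you invoke the Remark after Theorem~\ref{thm2.1} to bound $\|v_{tt}\|$, but that remark concerns the solution $u$, whereas the lemma is stated for a generic smooth $v$; the $C\Delta t$ term should simply be read as $C\Delta t\,\|v_{tt}\|_{\overline Q}$ with the constant absorbing the time derivatives of $v$ (which is how the paper uses it downstream, always with $v=y$ or $v=z$).

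Your flagged obstacle at the mesh-transition point is a genuine subtlety that the paper does not address. On a locally non-uniform stencil the centred first difference satisfies $D_x^0 v_i - v_x(x_i) = \tfrac{h_{i+1}-h_i}{2}\,v_{xx}(x_i) + O\big(\max(h_i,h_{i+1})\!\int|v_{xxx}|\big)$, so the stated bound with prefactor $h_i$ and only $v_{xxx},v_{xxxx}$ integrands is, strictly speaking, the locally-uniform-mesh estimate. In the paper's subsequent use this is largely harmless: in Lemma~\ref{lm4.6} the inner (layer) region has uniform spacing $h$, and in Case~II of Lemma~\ref{lm4.4} the transition index $3N/4$ typically falls outside $I$ so the midpoint scheme (whose bound is valid on non-uniform stencils) is applied there. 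The one place this could bite is Case~I of Lemma~\ref{lm4.4} when the mesh is non-uniform yet all indices lie in $I$; your instinct to isolate that single point and treat it separately in the convergence argument is the standard remedy, and nothing in the paper contradicts it.
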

\begin{proof}
	The proof of the above lemma follows easily from the Taylor's series expansion.
\end{proof}
The next two lemmas provide certain barrier functions which will be required to prove the $\varepsilon$-uniform error estimates on the  singular component. We construct the following barrier functions:
\begin{align}
\label{4.1b}
\bar{B}_i = \begin{cases}
\left(1+\dfrac{\alpha h}{{\varepsilon}}\right)^{-i}, \quad for ~ i=0,\ldots,N/4,\\\\
\left(1+\dfrac{\alpha h}{{\varepsilon}}\right)^{-N/4}, \quad for ~ i=N/4,\ldots,N/2,
\end{cases}
\end{align}
\begin{align}
\label{4.1}
\hat{B}_i = \begin{cases}
\left(1+\dfrac{\alpha h}{{\varepsilon}}\right)^{-N/4}, \quad for ~ i=N/2,\ldots,3N/4,\\\\
\left(1+\dfrac{\alpha h}{{\varepsilon}}\right)^{-(N-i)}, \quad for ~ i=3N/4,\ldots,N.
\end{cases}
\end{align}

\begin{lemma}
 	\label{lm4.2}
 	There exist discrete functions $\bar{B}_i$ and $\hat{B}_i$ such that for $1 \leq i \leq N-1$, we have
 	\begin{align*}
 	L_{\varepsilon}^{N,M} \bar{B}_i \leq \begin{cases}
 	\dfrac{-C }{\varepsilon}\bar{B}_i , \quad for ~ i=1,\ldots,N/4,\\\\
 	{-C \bar{B}_i}, \quad for ~ i=N/4+1,\ldots,N/2, 
 	\end{cases}
 	\end{align*}
 	\begin{align*}
 	L_{\varepsilon}^{N,M} \hat{B}_i \leq \begin{cases}
 	 {-C \hat{B}_i}, \quad for ~ i=N/2+1,\ldots,3N/4,\\\\
 	\dfrac{-C }{\varepsilon}\hat{B}_i , \quad for ~ i=3N/4+1,\ldots,N-1. 
 	\end{cases}
 	\end{align*}
\end{lemma}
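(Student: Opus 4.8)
The plan is to use that $\bar{B}_i$ and $\hat{B}_i$ are the explicit, time-independent mesh functions given in (\ref{4.1b})--(\ref{4.1}), so that $D_t^- \bar{B}_i = 0$ and the claim collapses to a purely spatial computation of $L_\varepsilon^{N,M}$ on each. The reflection $x \mapsto -x$, i.e.\ $i \mapsto N-i$, carries $\bar{B}$ on the left half to $\hat{B}$ on the right half and leaves the coefficient structure (\ref{1.3}) invariant, so it suffices to treat $\bar{B}_i$; throughout I write $\rho = 1 + \alpha h/\varepsilon$, so that $\bar{B}_i = \rho^{-i}$ on the fine part and $\bar{B}_i \equiv \rho^{-N/4}$ on the coarse part.

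I would dispose of the coarse branch $N/4 < i \le N/2$ first, where $\bar{B}_i$ is constant. There every difference quotient of $\bar B$ vanishes, so whether the central stencil or the midpoint-upwind stencil $L_{\varepsilon,mu,+}^{N,M}$ is active one is left only with the reaction term, giving $L_\varepsilon^{N,M}\bar{B}_i = -b_{i}^{n}\bar{B}_i \le -\beta \bar{B}_i \le -C\bar{B}_i$ by (\ref{1.3}); this is the second branch. For the fine branch $1 \le i < N/4$ the mesh is uniform with step $h$, and by the argument of Lemma \ref{lm3.1} one has $\{1,\dots,N/4\}\subseteq I$, so the central scheme is active. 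Using $\bar{B}_{i\pm1} = \rho^{\mp 1}\bar{B}_i$ I would compute $\varepsilon\delta_x^2\bar{B}_i = \tfrac{\alpha^2}{\varepsilon\rho}\bar{B}_i$ and $a_i^n D_x^0\bar{B}_i = -\tfrac{\alpha(\rho+1)}{2\varepsilon\rho}a_i^n\bar{B}_i$, so that
\[
L_{\varepsilon,cen}^{N,M}\bar{B}_i = \frac{\alpha\bar{B}_i}{2\varepsilon\rho}\bigl(2\alpha - a_i^n(\rho+1)\bigr) - b_i^n\bar{B}_i .
\]
Since $x_i \in [-1,-1+\tau]$ with $\tau \le 1/4$ gives $a_i^n = a_0(x_i,t_n)|x_i|^p \ge \alpha_0(1-\tau)^p > 0$, and since $\alpha \in (0,\alpha_0)$ can be fixed small enough (and $N_0$ large enough that $1 \le \rho \le 2$) that $a_i^n(\rho+1) \ge 2a_i^n \ge 2\alpha + c$ for some fixed $c>0$, the bracket is bounded above by $-c$; as $b_i^n>0$ only strengthens the sign, this yields $L_{\varepsilon,cen}^{N,M}\bar{B}_i \le -C\bar{B}_i/\varepsilon$, which is the first branch at interior fine nodes.

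I expect the main obstacle to be the junction $i=N/4$, where the mesh is non-uniform ($h_i=h$, $h_{i+1}=H$) and $\bar B$ has a convex kink. There $D_x^+\bar{B}_{N/4}=0$ while $D_x^-\bar{B}_{N/4} = -\tfrac{\alpha}{\varepsilon}\bar{B}_{N/4}$, so $\varepsilon\delta_x^2\bar{B}_{N/4} = \tfrac{2\alpha}{h+H}\bar{B}_{N/4}>0$; since $N/4 \in I$ forces $|a_{N/4}^n|h < 2\varepsilon$, the convective term is too weak to absorb this positive diffusive contribution at the node itself. This is the delicate step of the whole argument.

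The way I would make the estimate usable at this junction is the standard subdomain device: rather than insisting on the pointwise inequality exactly at $i=N/4$, I would carry out the comparison-function argument of the convergence analysis separately on the fine subdomain $\overline{\Omega}_1^N$ and the coarse subdomain $\overline{\Omega}_2^N$, treating $x_{N/4}$ as a boundary node at which the bound $|z(x_{N/4},t)| \le C\bar{B}_{N/4}$ is supplied directly by the continuous estimate of Theorem \ref{thm2.3}. On the interior of each subdomain the two inequalities just established hold, and Lemma \ref{lm3.2} then closes the estimate region by region. The real work, and the only genuinely nontrivial bookkeeping, lies in fixing $\alpha$, $\tau_0$ and $N_0$ so that the bracket above is uniformly negative throughout the layer (via (\ref{3.1}) and $1 \le \rho \le 2$) and in the careful matching at the junctions $i = N/4$, $i=N/2$ and $i=3N/4$.
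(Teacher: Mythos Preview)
Your two main cases---the constant plateau and the fine layer---are handled exactly as in the paper. The paper works with $\hat B$ first (and then declares $\bar B$ analogous): for $3N/4+1\le i\le N-1$ it applies the central stencil, expands $\hat B_{i\pm1}$ in terms of $\hat B_i$, and uses $a_i^n<0$ to make the bracket negative of order $1/\varepsilon$; for $N/2+1\le i\le 3N/4$ it simply asserts that, whichever scheme is active, $L_\varepsilon^{N,M}\hat B_i=-b(x_i,t_n)\hat B_i$ because $\hat B$ is constant there. Your explicit bracket $2\alpha-a_i^n(\rho+1)$ and the choice of $\alpha$ small enough is just a more detailed version of the same simplification.

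Where you go beyond the paper is the junction $i=N/4$ (respectively $i=3N/4$ for $\hat B$). The paper does not single this node out at all: its claim that all spatial differences of $\hat B$ vanish on $N/2+1\le i\le 3N/4$ tacitly ignores that $\hat B_{3N/4+1}=\rho\,\hat B_{3N/4}\neq\hat B_{3N/4}$, so $\varepsilon\delta_x^2\hat B_{3N/4}=\tfrac{2\alpha}{H+h}\hat B_{3N/4}>0$, exactly as you compute. Your observation that this node is genuinely delicate is correct, and your subdomain device (treat the transition point as a boundary node and run the comparison argument separately on the fine and coarse blocks) is a reasonable repair---but it is your own addition. The paper's proof simply glosses over the junction and does not invoke any such splitting.
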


\begin{proof}
	We first deduce the result for $L_\varepsilon^{N,M} \hat{B}_i$. Consider the case for $i \geq 3N/4+1$. Since, the set of points $\{3N/4+1, \ldots ,N-1\} \subset I$, the operator $L_{\varepsilon,cen}^{N,M}$ is applied on the barrier function $\hat{B}_{i}$, to get
	\begin{align}
	\label{4.2}
	L_{\varepsilon,cen}^{N,M} \hat{B}_i =  r_i^{-} \hat{B}_{i-1}+ r_i^{0} \hat{B}_{i}  + r_i^{+} \hat{B}_{i+1} ,
	\end{align}
	where
	\begin{align*}
	r_{i}^{-} = \dfrac{2 \varepsilon}{h_i \widehat{h}_i} - \dfrac{a_i}{\widehat{h}_i}, \quad r_{i}^{+} = \dfrac{2 \varepsilon}{h_{i+1} \widehat{h}_i} + \dfrac{ a_i}{\widehat{h}_i},\\
	r_{i}^{0}= -\dfrac{2 \varepsilon}{h_i \widehat{h}_i}- \dfrac{2 \varepsilon}{h_{i+1} \widehat{h}_i} - b_i^n. 
	\end{align*}
	On simplifying the eqn. (\ref{4.2}), we obtain
	\begin{align*}
	L_{\varepsilon,cen}^{N,M} \hat{B}_{i}  &\leq - \hat{B}_{i} \left[ \dfrac{\alpha}{2{h}} \left\lbrace \dfrac{h}{\varepsilon+ \alpha h}(-a_i^n-2 \alpha)- \dfrac{a_i^n h}{\varepsilon} \right\rbrace + b_i^n  \right].
	\end{align*}
	Since, $a_i^n<0$ for $i > N/2$, we get
	\begin{align*}
	L_{\varepsilon}^{N,M} \hat{B}_i \leq - \dfrac{C}{\varepsilon} \hat{B}_i.
	\end{align*}
	For $N/2+1 \leq i \leq 3N/4$, whether $i \in I$ or $i \notin I$ we can easily get
	\begin{align*}
	L_{\varepsilon}^{N,M}\hat{B}_i = - b(x_i,t_n) \hat{B}_i \leq -C \hat{B}_i.
	\end{align*}
	The result for $L_{\varepsilon}^{N,M} \bar{B}_i$ can be established  analogously.
\end{proof}
\begin{lemma}\label{lm4.3}
The discrete barrier functions $\bar{B}_i$ and $\hat{B}_i$ satisfy the following estimates
\begin{enumerate}
\item[(i)]
 $\exp(-\alpha(1+x_i)/\varepsilon) \leq \bar{B}_i$ and  $\exp(-\alpha(1-x_i)/\varepsilon) \leq \hat{B}_i$,
\item[(ii)]
$
 \bar{B}_i \leq \begin{cases}
1, ~ \text{for}~ 1 \leq i \leq N/4,\\
C N^{-\alpha \tau_0} L^{\alpha \tau_0}, ~ \text{for}~ N/4 < i \leq N/2,
\end{cases}
$  \newline and \newline
$
 \hat{B}_i \leq \begin{cases}
C N^{-\alpha \tau_0} L^{\alpha \tau_0}, ~ \text{for}~ N/2 < i \leq 3N/4,\\
1, ~ \text{for}~ 3N/4 < i \leq N-1.
\end{cases}
$
\end{enumerate}
\end{lemma}
\begin{proof}
To prove the estimates (i) and (ii) one mainly uses the inequalities $e^{-t} \leq (1+t)^{-1}$ and $(1+t)^{-1} \leq e^{t+t^2}$, respectively, where $t \geq 0$. For more details one can refer to [\cite{becher},~Appendix].
\end{proof}
We will split the discrete problem (\ref{3.2})-(\ref{3.2b}) into left and right discrete problems centred around $x_{N/2}=0$. The problems are defined as 
\begin{align}
\label{4.3a}
L_\varepsilon^{N,M}U_L(x_i,t_n)&=f_i^n, ~ (x_i,t_n) \in Q_{L,\tau}^{N,M}=Q_{\tau}^{N,M} \cap Q_L , \nonumber \\
U_L(-1,t_n)&=g(-1,t_n), ~ for ~ n \Delta t \leq T, \nonumber\\
U_L(0,t_n)&=u(0,t_n), ~ for ~ n \Delta t \leq T,\nonumber\\
U_L(x_i,0)&=g(x_i,0),~for~ i \leq N/2,
\end{align}
and
\begin{align}
\label{4.3b}
L_\varepsilon^{N,M}U_R(x_i,t_n)&=f_i^n, ~ (x_i,t_n) \in Q_{R,\tau}^{N,M}=Q_{\tau}^{N,M} \cap Q_R , \nonumber \\
U_R(1,t_n)&=g(1,t_n), ~ for ~ n \Delta t \leq T, \nonumber\\
U_R(0,t_n)&=u(0,t_n), ~ for ~ n \Delta t \leq T,\nonumber\\
U_R(x_i,0)&=g(x_i,0),~for~ i \geq N/2,
\end{align} 
where $ U(x_i,t_n) = \begin{cases} U_L(x_i,t_n), ~ i \leq N/2,~ n \Delta t \leq T,\\
U_R(x_i,t_n), ~ i \geq N/2,~ n \Delta t \leq T \end{cases}$.
To obtain $\varepsilon$-uniform error estimates, we decompose each $U_{L \backslash R}$ into a regular components $Y_{L \backslash R}$ and a singular components $Z_{L \backslash R}$ as
\begin{align}
\label{4.3}
U_{L \backslash R}(x_i,t_n) = (Y_{L \backslash R} + Z_{L \backslash R})(x_i,t_n), \quad (x_i,t_n) \in \overline{Q}_{\tau}^{N,M},
\end{align}
where the discrete left and right regular components $Y_{L}(x_i,t_n)$ and $Y_{R}(x_i,t_n)$, respectively, are the solutions of the problems
\begin{align}
\label{4.4}
L_{\varepsilon}^{N,M} Y_L(x_i,t_n) &= f(x_i,t_n), \quad \forall~ (x_i,t_n) \in Q_{L,\tau}^{N,M},\nonumber \\
Y_L(x_i,t_n)&=y(x_i,t_n), \quad \forall~(x_i,t_n) \in S_l^{M} ,\nonumber \\
Y_L(x_i,t_n)&=u(x_i,t_n), \quad \forall~(x_i,t_n) \in (S_x^{N} \cap Q_{L,\tau}^{N,M}) \cup \{(x_{N/2},t_n), ~ n \Delta t \leq T\},
\end{align}
and
\begin{align}
\label{4.4a}
L_{\varepsilon}^{N,M} Y_R(x_i,t_n) &= f(x_i,t_n), \quad \forall~ (x_i,t_n) \in Q_{R,\tau}^{N,M},\nonumber \\
Y_R(x_i,t_n)&=y(x_i,t_n), \quad \forall~(x_i,t_n) \in S_r^{M},\nonumber \\
Y_R(x_i,t_n)&=u(x_i,t_n), \quad \forall~(x_i,t_n) \in (S_x^{N} \cap Q_{R,\tau}^{N,M}) \cup \{(x_{N/2},t_n), ~ n \Delta t \leq T\} .
\end{align}
The discrete left and right singular components $Z_{L}$ and $Z_{R}$, respectively, are the solutions of the problems 
\begin{align} \label{4.5}
L_{\varepsilon}^{N,M} Z_L(x_i,t_n) &= 0, \quad \forall~ (x_i,t_n) \in Q_{L,\tau}^{N,M},\nonumber \\
Z_L(x_i,t_n)&=z_l(x_i,t_n), \quad \forall~(x_i,t_n) \in S_l^{M} ,\nonumber \\
Z_L(x_i,t_n)&=0, \quad \forall~(x_i,t_n) \in (S_x^{N} \cap Q_{L,\tau}^{N,M}) \cup \{(x_{N/2},t_n), ~ n \Delta t \leq T\},
\end{align}
and
\begin{align}
\label{4.6}
L_{\varepsilon}^{N,M} Z_R(x_i,t_n) &= 0, \quad \forall~ (x_i,t_n) \in Q_{R,\tau}^{N,M},\nonumber \\
Z_R(x_i,t_n)&=z_r(x_i,t_n), \quad \forall~(x_i,t_n) \in S_r^{M} ,\nonumber \\
Z_R(x_i,t_n)&=0, \quad \forall~(x_i,t_n) \in (S_x^{N} \cap Q_{R,\tau}^{N,M}) \cup \{(x_{N/2},t_n), ~ n \Delta t \leq T\} .
\end{align}
\begin{lemma}[Error estimate for the regular component]
	\label{lm4.4}
	Under the assumptions (\ref{3.7}), the following error estimate is satisfied by the smooth component at each mesh points $(x_i,t_n)  \in  \overline{Q}_{\tau}^{N,M}$,
	\begin{align*}
	|(Y-y)(x_i,t_n)| \leq C(\Delta t + N^{-2}), \quad \mbox{for} ~ 0 \leq i \leq N, ~ n \Delta t \leq T.
	\end{align*}
\end{lemma}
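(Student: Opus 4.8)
The plan is to set $W = Y - y$ and control it through the discrete stability estimate of Lemma \ref{lm3.3}. First I would observe that on the boundary data sets $S_l^M\cup S_r^M$ and $S_x^N$ the function $W$ vanishes, since $Y_{L\backslash R}$ is prescribed there to equal $y$ (recall $y=u$ on $S_x\cap(\overline Q_1\cup\overline Q_3)$ and $z=0$ on $S_x$), while at the interface node $x_{N/2}=0$ it differs from zero only by $z(0,t_n)$, which by Theorem \ref{thm2.3} is $O(\exp(-\alpha/\varepsilon))$ and thus negligible against $N^{-2}$. In the interior, since $y$ solves $L_\varepsilon y=f$ and $Y$ solves the discrete equation with the same pointwise or midpoint-evaluated right-hand side, $|L_\varepsilon^{N,M}W|$ equals exactly the local truncation error $\tau_i^n$ of Lemma \ref{lm4.1} applied to $v=y$. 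Hence everything reduces to estimating $\tau_i^n$ region by region using the derivative bounds of Theorem \ref{thm2.3}, and then feeding the result into Lemma \ref{lm3.3}.

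For $i\in I$ (central differencing) Lemma \ref{lm4.1} bounds $\tau_i^n$ by $C[\Delta t + h_i\varepsilon\int|y_{xxxx}|\,dx + h_i\int|y_{xxx}|\,dx]$. The crucial point is the cancellation furnished by Theorem \ref{thm2.3}: $\|y_{xxxx}\|\leq C\varepsilon^{-1}$, so that $h_i\varepsilon\int_{x_{i-1}}^{x_{i+1}}|y_{xxxx}|\,dx\leq C h_i\widehat h_i$, while $\|y_{xxx}\|\leq C$ gives $h_i\int|y_{xxx}|\,dx\leq C h_i\widehat h_i$ as well, with no surviving power of $\varepsilon^{-1}$. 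Since $\widehat h_i\leq 2H\leq 8/N$ throughout the central block (and is far smaller inside the fine layer mesh), this yields $\tau_i^n\leq C(\Delta t + N^{-2})$.

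For $i\notin I$ the midpoint-upwind stencil is used, and by the argument in the proof of Lemma \ref{lm3.1} this happens only for indices lying in the coarse central block $\overline\Omega_2$, where $h_i=H$. Here Lemma \ref{lm4.1} gives $\tau_i^n\leq C[\Delta t + \varepsilon\int|y_{xxx}|\,dx + h_{i+1}\int(|y_{xxx}|+|y_{xx}|+|y_x|)\,dx]$, and Theorem \ref{thm2.3} bounds all three derivatives by $C$. The first integral contributes $C\varepsilon\widehat h_i$; I would absorb the factor $\varepsilon$ using the defining inequality $|a_i h_i|\geq 2\varepsilon$ of the complement of $I$, which gives $\varepsilon\leq C h_i = CH$, so $\varepsilon\widehat h_i\leq CH^2\leq CN^{-2}$. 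The second integral is at most $C h_{i+1}^2 = CH^2\leq CN^{-2}$. Thus again $\tau_i^n\leq C(\Delta t + N^{-2})$, and the mirror-image estimate holds for the backward midpoint scheme when $i>N/2$.

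Combining the cases gives $\max_{Q^{N,M}_\tau}|L_\varepsilon^{N,M}W|\leq C(\Delta t + N^{-2})$, while $\max_{S^{N,M}}|W|$ is zero or exponentially small; Lemma \ref{lm3.3} then delivers the claimed bound $|(Y-y)(x_i,t_n)|\leq C(\Delta t + N^{-2})$. I expect the main obstacle to be the bookkeeping that simultaneously exploits the $\varepsilon\cdot\varepsilon^{-1}$ cancellation in the central region and the inequality $\varepsilon\leq C h_i$ precisely on the indices where the upwind scheme is active—that is, matching each stencil to its correct mesh width and the sharpest applicable derivative bound of Theorem \ref{thm2.3} so that no stray negative power of $\varepsilon$ remains. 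A secondary point requiring care is the clean treatment of the interface node $x_{N/2}=0$, so that the split left and right problems (\ref{4.3a})--(\ref{4.3b}) glue together up to an exponentially small error that is harmlessly absorbed into $C N^{-2}$.
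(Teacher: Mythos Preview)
Your argument is correct and follows essentially the same route as the paper: bound the truncation error via Lemma~\ref{lm4.1} and the derivative estimates of Theorem~\ref{thm2.3}, then invoke the discrete stability Lemma~\ref{lm3.3}. The only organizational difference is that the paper first splits into the cases $\varepsilon > \|a\|_\infty/N$ (where $I$ contains all interior indices) and $\varepsilon \leq \|a\|_\infty/N$, whereas you handle both at once through the implication $i\notin I \Rightarrow \varepsilon\leq C h_i$; your explicit treatment of the interface node $x_{N/2}$ is also a small refinement over the paper's presentation.
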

\begin{proof}
	Firstly, the result is deduced for the right regular component $Y_R$. We will consider two cases depending upon the relationship between $\varepsilon$ and $N$.
	\begin{enumerate}
	 \item[\textbf{Case I:}] When $\varepsilon > \|a\|_{\infty}/N$. In this case we have $|a_i h_i| < 2 \varepsilon$, for all $i \in \{N/2+1,\ldots,N-1\} $ which implies the set $\{N/2+1, \ldots, N-1\} \subseteq I$. We get
 \begin{align*}
 |L_{\varepsilon}^{N,M}(Y_R-y)(x_i,t_n)| &= |L_{\varepsilon,cen}^{N,M}(Y_R-y)(x_i,t_n)|,\quad \forall  ~N/2+1\leq i \leq N-1,\thinspace n\Delta t \leq T  \\
& \leq C[\Delta t+h_i(h_{i+1}+h_i)(\varepsilon|y_{xxxx}|+|y_{xxx}|)].
\end{align*} 
Using the fact that $ h_{i+1}+h_i\leq 2N^{-1} $, for all $i$ and the bounds on the derivatives of $ y $ given in Theorem \ref{thm2.3}, we get
  \begin{align*}
 |L_{\varepsilon}^{N,M}(Y_R-y)(x_i,t_n)|\leq C(\Delta t+ N^{-2}),\quad N/2+1 \leq i \leq N-1,\thinspace n\Delta t \leq T.
 \end{align*}  
 Applying Lemma \ref{lm3.3}, we can obtain 
 \begin{align}
 \label{t1}
 |(Y_R-y)(x_i,t_n)|\leq C(\Delta t+ N^{-2}),\quad N/2 \leq i \leq N,\thinspace n\Delta t \leq T.
 \end{align}

	\item[\textbf{Case II:}]
	When $\varepsilon \leq \|a\|_{\infty}/N$. In this case the truncation error associated with the smooth component for $i > N/2$ is given by
	\begin{align*}
	|L_{\varepsilon}^{N,M}(Y_R-y)(x_i,t_n)| \leq \begin{cases}
	C \Delta t \left[ h_i(h_i + h_{i+1})(\varepsilon |y_{xxxx}| + |y_{xxx}|)\right], \quad \text{for} ~ i \in I,\\
	C \Delta t \left[ \varepsilon(h_{i+1} + h_i)|y_{xxx}| + h_{i}^2( |y_{xxx}| + |y_{xx}| + |y_{x}|) \right], \quad \mbox{for} ~ i \notin I.
	\end{cases}
	\end{align*}
	Applying the bounds given in Theorem~\ref{thm2.3} and using the fact that $ h_{i+1}+h_i\leq 2N^{-1} $ and $\varepsilon \leq C N^{-1}$, we obtain
	\begin{align*}
	|L_{\varepsilon}^{N,M}(Y_R-y)(x_i,t_n)| \leq C (\Delta t + N^{-2}), \quad N/2+1 \leq i \leq N-1.
	\end{align*}
	Now, using the Lemma~\ref{lm3.3}, we get 
	\begin{align}
	\label{4.7}
	|(Y_R-y)(x_i,t_n)| \leq C (\Delta t + N^{-2}), \quad for ~ N/2\leq i \leq N ,~n\Delta t \leq T.
	\end{align}
	Similarly, for $i \leq N/2$, we can show  
	\begin{align}
	\label{4.8}
	|(Y_L-y)(x_i,t_n)| \leq C (\Delta t + N^{-2}), \quad for ~ 0 \leq i\leq N/2 ,~n\Delta t \leq T.
	\end{align}
	Combining eqns. (\ref{4.7})-(\ref{4.8}), we get
	\begin{align*}
	|(Y-y)(x_i,t_n)| \leq C (\Delta t + N^{-2}), \quad for ~ (x_i,t_n) \in \overline{Q}_\tau^{N,M}.
	\end{align*}
	\end{enumerate}
\end{proof}
\begin{lemma}[Error estimates for the singular component]
	\label{lm4.6}
	Under the assumption (\ref{3.7}), the following error estimate is satisfied by the singular component $Z(x_i,t_n)$ at each mesh points $(x_i,t_n) \in \overline{Q}_{\tau}^{N,M}$
	\begin{align*}
	|(Z-z)(x_i,t_n)| \leq C(\Delta t + N^{-2}L^{2}), \quad for ~ 0 \leq i \leq N,~ n \Delta t \leq T.
	\end{align*}
\end{lemma}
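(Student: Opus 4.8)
The plan is to prove the estimate on the right half $\{i\geq N/2\}$ for $Z_R-z_r$ and, by the symmetry of the construction about the turning point $x_{N/2}=0$, to obtain the left half from the identical argument with $\bar B_i$ replacing $\hat B_i$. Since $Z_R$ and $z_r$ solve the homogeneous problems \eqref{4.6} and the continuous layer equation, the mesh function $Z_R-z_r$ satisfies $L_\varepsilon^{N,M}(Z_R-z_r)(x_i,t_n)=-\bigl(L_\varepsilon^{N,M}-L_\varepsilon\bigr)z_r(x_i,t_n)$, i.e.\ exactly the truncation error $\tau_i^n$ of Lemma~\ref{lm4.1}. On $S^{N,M}$ this difference vanishes at $t=0$ and at $x=1$, while at the interface $x_{N/2}=0$ it equals $-z_r(0,t_n)$, which by Theorem~\ref{thm2.3} is $O(\exp(-\alpha/\varepsilon))$ and hence dominated by $N^{-2}L^2$. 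I will then distinguish the two mesh regimes $\tau=\tau_0\varepsilon L$ and $\tau=1/4$.

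In the layer-adapted regime $\tau=\tau_0\varepsilon L$ I split the right half into the fine submesh $3N/4<i\leq N$, where (as observed in the proof of Lemma~\ref{lm4.2}) central differencing is active, and the coarse submesh $N/2<i\leq 3N/4$. On the fine submesh I substitute the derivative bounds of Theorem~\ref{thm2.3} into the integral truncation estimate of Lemma~\ref{lm4.1}; since $h=4\tau_0\varepsilon L/N$, the factor $h^2\varepsilon^{-3}$ yields $\tau_i^n\leq C\Delta t+C\varepsilon^{-1}N^{-2}L^2\hat B_i$. The decisive cancellation is supplied by Lemma~\ref{lm4.2}, which gives $L_\varepsilon^{N,M}\hat B_i\leq-(C/\varepsilon)\hat B_i$ there: taking the barrier $\Phi_i=C_1\Delta t/\beta+C_2N^{-2}L^2\hat B_i$, the constant part is handled by $L_\varepsilon^{N,M}(1)=-b_i^n\leq-\beta$ and the $\hat B_i$-part absorbs the $\varepsilon^{-1}$, so that $L_\varepsilon^{N,M}\Phi_i\leq-\tau_i^n$ for $C_1,C_2$ large. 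Because $\hat B_i\leq 1$ one has $\Phi_i\leq C(\Delta t+N^{-2}L^2)$, and the discrete minimum principle (Lemma~\ref{lm3.2}) applied to $\Phi_i\pm(Z_R-z_r)$ gives the bound on the fine submesh.

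On the coarse submesh I avoid consistency altogether (there the derivatives of $z_r$ carry uncontrolled $\varepsilon^{-k}$ factors) and instead bound $Z_R$ and $z_r$ separately. Theorem~\ref{thm2.3} gives $|z_r(x_i,t_n)|\leq C\exp(-\alpha(1-x_i)/\varepsilon)\leq C\hat B_i$ by Lemma~\ref{lm4.3}(i), while comparing $Z_R$ against $C\hat B_i$ through Lemmas~\ref{lm4.2} and~\ref{lm3.2} (using $\hat B_N=1\geq|z_r(1,t_n)|/C$ on the boundary) gives $|Z_R(x_i,t_n)|\leq C\hat B_i$; Lemma~\ref{lm4.3}(ii) then bounds $\hat B_i\leq CN^{-\alpha\tau_0}L^{\alpha\tau_0}\leq CN^{-2}L^2$, provided $\tau_0$ is fixed with $\alpha\tau_0\geq2$. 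In the remaining regime $\tau=1/4$ the transition condition forces $\varepsilon^{-1}\leq 4\tau_0L=CL$, so the layer is already resolved and the $\varepsilon$-powers in Theorem~\ref{thm2.3} are only logarithmically large; running the same truncation-plus-barrier argument (now with $h=O(N^{-1})$ and the $\varepsilon^{-1}$ gain of Lemma~\ref{lm4.2}) leaves a residual $\varepsilon^{-2}\leq CL^2$ and again yields $C(\Delta t+N^{-2}L^2)$. Assembling the fine- and coarse-submesh estimates for $Z_R-z_r$, the analogous estimates for $Z_L-z_l$, and noting that on each half the opposite boundary-layer function is exponentially small, produces $|(Z-z)(x_i,t_n)|\leq C(\Delta t+N^{-2}L^2)$ at every mesh point.

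The main obstacle is the fine-submesh step: the pointwise truncation error is genuinely $O(\varepsilon^{-1})$, and only the precise matching of this singularity against the compensating $(C/\varepsilon)$ factor in $L_\varepsilon^{N,M}\hat B_i$ from Lemma~\ref{lm4.2}—together with the requirement $\alpha\tau_0\geq2$ that makes the coarse-region barrier value $O(N^{-2}L^2)$—delivers the almost second-order accuracy instead of a mere first-order bound. The $\Delta t$ contribution, arising from the implicit Euler time truncation, is by contrast routine, being controlled $\varepsilon$-uniformly by the constant barrier through $b\geq\beta>0$.
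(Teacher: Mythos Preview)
Your strategy is the same as the paper's: on the coarse part $N/2\le i\le 3N/4$ you bound $Z_R$ and $z_r$ separately by $C\hat B_i$ and invoke Lemma~\ref{lm4.3}(ii) with $\alpha\tau_0\ge2$, and on the fine part $3N/4<i\le N$ you combine the central truncation estimate of Lemma~\ref{lm4.1} with the $\varepsilon^{-1}$ gain in Lemma~\ref{lm4.2} through a barrier containing $N^{-2}L^2\hat B_i$. The paper proceeds identically, only with the coarse region treated first and the barrier $C\bigl(N^{-2}L^2\hat B_i+(\Delta t+N^{-2}L^2)t_n\bigr)$ in place of your $C_1\Delta t/\beta+C_2N^{-2}L^2\hat B_i$.

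There is, however, one genuine gap in your fine-submesh barrier. To apply the discrete minimum principle on $\{3N/4,\dots,N\}$ you need $\Phi_{3N/4}\ge\bigl|(Z_R-z_r)(x_{3N/4},t_n)\bigr|$. By Lemma~\ref{lm4.3}(ii), $\hat B_{3N/4}\le CN^{-\alpha\tau_0}L^{\alpha\tau_0}=O(N^{-2}L^2)$, so your barrier at that point is only $C_1\Delta t/\beta+O(N^{-4}L^4)$, which does \emph{not} dominate the $O(N^{-2}L^2)$ error supplied by the coarse-region estimate. You must add a constant term $C_3N^{-2}L^2$ (or, as the paper does, $(\Delta t+N^{-2}L^2)t_n$) to $\Phi_i$; this is harmless inside since $L_\varepsilon^{N,M}$ applied to a constant contributes $-b_i^n\cdot\mathrm{const}\le 0$. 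Relatedly, the logical order must be coarse first, fine second: the coarse estimate \eqref{4.16a} is precisely what furnishes the boundary value at $x_{3N/4}$ needed for the fine-submesh comparison, so your presentation should be reversed. With these two adjustments your argument coincides with the paper's.
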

\begin{proof}
	Firstly, consider the right boundary layer component $Z_R$. To start with, we first compute the error in the outer region $ [0,1-\tau] \times (0,T] $ and then, we analyse the error in the inner region $(1-\tau,1] \times (0,T]$. In the outer region, both $Z_R$ and $z_r$ are small irrespective of the fact whether $i \in I$ or $i \notin I $. So, we use the following barrier functions
	\begin{align}
	\label{4.13}
	\Phi_{R,i}^{n,\pm} = C \hat{B}_i \pm Z_{R}(x_i,t_n), \quad (x_i,t_n) \in \overline{Q}^{N,M}_{R,\tau},
	\end{align}
	where $C=|z_r(x_N,t_n)|$. Using Lemma \ref{lm4.2}, we get
	\begin{align*}
	L_{\varepsilon}^{N,M} \Phi^{n,\pm}_{R,i} = C L_{\varepsilon}^{N,M} \hat{B}_i \pm L_{\varepsilon}^{N,M} Z_R(x_i,t_n) \leq 0.	
	\end{align*}
	Also, $\Phi_{R,N/2}^{n,\pm} \geq 0 $ and $\Phi_{R,i}^{0,\pm} \geq 0 $, for $N/2 \leq i \leq N, ~ n \Delta t \leq T$, and $\Phi_{R,{N}}^{n,\pm}=C \pm Z_R(x_N,t_n) \geq 0$, $n \Delta t \leq T$.Using discrete minimum principle, we get
	\begin{align}
	\label{4.14}
	|Z_{R}(x_i,t_n)| \leq C \hat{B}_i, \quad for ~ N/2 \leq i \leq N,~ n \Delta t \leq T.
	\end{align}
	Now, for $N/2 \leq i \leq 3N/4$, inequality (\ref{4.14}) and Lemma \ref{lm4.3} results into
	\begin{align}
	\label{4.15}
	|Z_{R}(x_i,t_n)| \leq C L^{\alpha \tau_0}N^{-\alpha \tau_0} .
	\end{align}
	From Theorem \ref{thm2.3} and Lemma \ref{lm4.3}, we obtain 
	\begin{align}
	\label{4.16}
	|z_r(x_i,t_n)| &\leq \exp \left( \dfrac{-\alpha(1-x_i)}{{\varepsilon}} \right) \leq C \hat{B}_i \leq C L^{\alpha \tau_0}N^{-\alpha \tau_0}.
	\end{align}
	Using (\ref{4.15}) and (\ref{4.16}), we can obtain the following bound in the outer region $[0,1-\tau] \times (0,T]$
	\begin{align}
	\label{4.16a}
	|Z_R(x_i,t_n) - z_r(x_i,t_n)| &\leq |Z_R(x_i,t_n)| + |z(x_i,t_n)|\nonumber \\
	& \leq C N^{-2} L^2,\quad \forall~  N/2 \leq i \leq 3N/4, ~ n\Delta t \leq T,
	\end{align}  
	with the choice of $\tau_0=2/\alpha$. \newline
Similarly, using the same arguments as in the previous case we can show that the left singular component $Z_L$ in the outer region $[-1+\tau,0] \times (0,T]$, satisfies the error bound   
\begin{align}
\label{4.16b}
|Z_{L}(x_i,t_n)-z_l(x_i,t_n)| \leq C N^{-2} L^2,\quad \forall~  N/4 \leq i \leq N/2, ~ n\Delta t \leq T.
\end{align}
Next, we consider the inner region $(1-\tau,1] \times (0,T]$. Since for all $N \geq N_0$ satisfying condition (\ref{3.7}), we have $ |a_i h_i| < 2 \varepsilon$, for all $i=3N/4+1,\ldots,N$, therefore $\{3N/4+1,\ldots,N\} \subset I$. Hence, $L_{\varepsilon,cen}^{N,M}$ is applied in the layer region $(1-\tau,1] \times (0,T]$.\newline
 For $3N/4+1\leq i \leq N-1$, we have 
\begin{align}
\label{4.20}
|L_{\varepsilon,cen}^{N,M}(Z_{R}-z_r)(x_i,t_n)| &\leq C \left[ \Delta t + h_i \int_{x_{i-1}}^{x_{i+1}} \varepsilon |(z_r)_{xxxx}| + |(z_r)_{xxx}| dx \right] \nonumber \\ 
& \leq C \left[ \Delta t + \dfrac{h_i}{\varepsilon^3} \int_{x_{i-1}}^{x_{i+1}} \exp \left( \dfrac{-\alpha(1-x)}{\varepsilon} \right) dx \right]\nonumber \\
&= C \left[ \Delta t + \dfrac{h_i}{\varepsilon^2 \alpha} \left\{ \exp \left( \dfrac{-\alpha (1-x_{i+1})}{\varepsilon} \right) - \exp \left( \dfrac{-\alpha (1-x_{i-1})}{\varepsilon} \right)  \right\} \right] \nonumber \\
&= C \left[ \Delta t + \dfrac{h_i}{\varepsilon^2 \alpha} \exp \left( \dfrac{-\alpha (1-x_{i})}{\varepsilon} \right) \left\{ \exp \left( \dfrac{\alpha h}{\varepsilon} \right) - \exp \left( \dfrac{-\alpha h}{\varepsilon} \right)  \right\} \right]\nonumber \\
&= C \left[ \Delta t + \dfrac{h_i}{\varepsilon^2 \alpha} \exp \left( \dfrac{-\alpha (1-x_{i})}{\varepsilon} \right) \sinh \left( \dfrac{\alpha h}{\varepsilon} \right) \right].
\end{align}
Under the assumption $2 \tau_0 \| a \|_{\overline{Q}} < \dfrac{N_0}{\ln N_0}$, we have $\alpha h/ {\varepsilon}< 2$. Also, we know that $\sinh {\xi} \leq C \xi$, for $0 \leq \xi \leq 2$, so we have $\sinh \left(\dfrac{\alpha h}{\varepsilon}\right) \leq C {\dfrac{\alpha h}{\varepsilon}}$. \newline
From inequality (\ref{4.20}), we get
\begin{align*}
|L_{\varepsilon,cen}^{N,M}(Z_{R}-z_r)(x_i,t_n)| &\leq C \left[ \Delta t + \dfrac{h^2}{\varepsilon^{3}} \exp \left( \dfrac{- \alpha(1- x_i)}{ \varepsilon } \right) \right] \nonumber \\
&=C \left[ \Delta t + \dfrac{N^{-2}L^2}{\varepsilon} \exp \left( \dfrac{- \alpha(1- x_i)}{ \varepsilon } \right) \right].
\end{align*}
Using Lemma \ref{lm4.3}, we get
\begin{align}
\label{4.21}
|L_{\varepsilon,cen}^{N,M}(Z_{R}-z_r)(x_i,t_n)| 
&\leq C \left[ \Delta t + \dfrac{N^{-2}L^2}{\varepsilon} \hat{B}_i \right].
\end{align}
From inequality (\ref{4.16a}), we get
\begin{align*}
|(Z_R-z_r)(x_{3N/4},t_n)| \leq C N^{-2}L^{2}.
\end{align*}
Also, $|(Z_R-z_r)(x_N,t_n)|=|(Z_R-z_r)(x_i,t_0)|=0$, for $3N/4 \leq i \leq N,~n \Delta t \leq T$. Using the estimate given in eqn. (\ref{4.21}), we construct the following barrier functions
\begin{align*}
\psi^{\pm}(x_i,t_n)= C(N^{-2}L^{2} \hat{B}_i + (\Delta t + N^{-2}L^2) t_n ) \pm (Z_R-z_r)(x_i,t_n).
\end{align*}
It can be observed that $\psi^{\pm}(x_i,t_n) \geq 0$ at the points  $(x_{3N/4},t_n)$, $(x_{N},t_n)$ and $(x_i,t_0)$ in $S^{N,M}$. Using Lemma \ref{lm4.2} and eqn. (\ref{4.21}), we get
\begin{align*}
L_{\varepsilon,cen}^{N,M} \psi^{\pm}(x_i,t_n) = &C( N^{-2} L^{2} L_{\varepsilon,cen}^{N,M} \hat{B}_{i}(x_it_n) - d(x_i,t_n)(\Delta t + N^{-2}L^{2}) ) \\
&\pm L_{\varepsilon,cen}^{N,M} (Z_R-z_r)(x_i,t_n) \leq 0, ~ for~ 3N/4+1 \leq i \leq N-1,~ n \Delta t \leq T, 
\end{align*} 
On using the discrete minimum principle, we get the following estimate 
\begin{align*}
|(Z_R-z_r)(x_i,t_n)| &\leq C(N^{-2}L^{2} \hat{B}_i + \Delta t),~for~3N/4 \leq i \leq N,~n \Delta t \leq T \\
& \leq C(\Delta t + N^{-2}L^{2}).
\end{align*}
On similar lines we can show the following error bound for left singular component $Z_L$ in the inner region $[-1,-1+\tau] \times (0,T]$ 
\begin{align*}
|(Z_L-z_l)(x_i,t_n)| &\leq C(N^{-2}L^{2} \bar{B}_i + \Delta t),~for~0 \leq i \leq N/4,~n \Delta t \leq T \\
& \leq C(\Delta t + N^{-2}L^{2}).
\end{align*}
\end{proof}
Combining the error estimates for the regular and the singular components, $Y(x_i,t_n)$ and $Z(x_i,t_n)$, respectively, we can obtain the error estimate for the discrete solution $U(x_i,t_n)$ of the problem (\ref{3.2})-(\ref{3.2b}) at each mesh points $(x_i,t_n) \in \overline{Q}_{\tau}^{N,M}$.
\begin{theorem}
	\label{thm3.1}
	Let $u(x_i,t_n)$ be the exact solution of the problem (\ref{1.1})-(\ref{1.3}) and $U(x_i,t_n)$ be the discrete solution of the problem (\ref{3.2})-(\ref{3.2b}) at each mesh points $(x_i,t_n) \in \overline{Q}_{\tau}^{N,M}$. Then, for $ N \geq N_0 $ with the assumption (\ref{3.7}), we have
	\begin{align*}
	\|(U-u)\|_{\overline{Q}_{\tau}^{N,M}} \leq 
	C(\Delta t + N^{-2}L^{2} + N^{-2}).
	\end{align*} 
\end{theorem}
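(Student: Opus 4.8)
The plan is to reduce the global error bound to the two component estimates already established in Lemma~\ref{lm4.4} and Lemma~\ref{lm4.6}, exploiting the linearity of the discrete operator together with the triangle inequality. First I would record the continuous splitting $u = y + z$ supplied by Theorem~\ref{thm2.3}, and the discrete splitting $U = Y + Z$, where $Y$ and $Z$ denote the piecewise-defined regular and singular components obtained by gluing $Y_L,Y_R$ (respectively $Z_L,Z_R$) at the interface node $x_{N/2}=0$. The entire theorem then hinges on showing these two decompositions can be subtracted componentwise.

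Before invoking the two lemmas, I would verify that the discrete decomposition is genuinely consistent, i.e. that $U = Y + Z$ at every mesh point. Since $L_{\varepsilon}^{N,M}$ is linear, adding the defining problems (\ref{4.4})--(\ref{4.4a}) for the regular part and (\ref{4.5})--(\ref{4.6}) for the singular part yields $L_{\varepsilon}^{N,M}(Y+Z) = f$ on $Q_{\tau}^{N,M}$. On the boundary set $S^{N,M}$ one checks that the prescribed data add up correctly: on $S_r^{M}$ one has $Y_R + Z_R = y + z_r = u = g$ (using that $z = z_r$ there), on the initial line $Y + Z = u + 0 = g$, and at the interface node $Y_{L\backslash R} + Z_{L\backslash R} = u(0,t_n) + 0$, matching the condition imposed on $U$ in (\ref{4.3a})--(\ref{4.3b}). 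Hence $U$ and $Y + Z$ solve the same discrete problem, and since the associated matrix is an M-matrix (Lemma~\ref{lm3.1}) the solution is unique; equivalently, the discrete minimum principle (Lemma~\ref{lm3.2}) applied to $\pm(U - Y - Z)$ forces $U = Y + Z$.

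With consistency established, the estimate follows at once. At any mesh point $(x_i,t_n) \in \overline{Q}_{\tau}^{N,M}$ I would write
\begin{align*}
|(U-u)(x_i,t_n)| = |(Y-y)(x_i,t_n) + (Z-z)(x_i,t_n)| \leq |(Y-y)(x_i,t_n)| + |(Z-z)(x_i,t_n)|,
\end{align*}
and then insert the bound $C(\Delta t + N^{-2})$ from Lemma~\ref{lm4.4} and the bound $C(\Delta t + N^{-2}L^{2})$ from Lemma~\ref{lm4.6}. Taking the supremum over all nodes yields
\begin{align*}
\|U - u\|_{\overline{Q}_{\tau}^{N,M}} \leq C(\Delta t + N^{-2} + N^{-2}L^{2}),
\end{align*}
which is the claimed estimate.

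The genuine analytical work for this result has already been absorbed into the two component lemmas, so I do not expect a serious obstacle at this stage. The single point I would not treat as purely automatic is the bookkeeping at the interface node $x_{N/2}=0$, where the left and right subproblems are stitched together using the exact value $u(0,t_n)$: one must confirm that this gluing is compatible with the additive decomposition, so that the discrete regular and singular components recombine to reproduce the full discrete solution $U$ rather than some perturbation of it. Once that compatibility is checked, the remaining argument is the elementary triangle-inequality combination sketched above.
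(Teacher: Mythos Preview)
Your proposal is correct and matches the paper's approach: the paper simply states that Theorem~\ref{thm3.1} follows by combining the regular-component estimate of Lemma~\ref{lm4.4} with the singular-component estimate of Lemma~\ref{lm4.6}, without spelling out the triangle-inequality step or the interface bookkeeping. Your additional verification that $U=Y+Z$ via uniqueness (using Lemma~\ref{lm3.1} or the discrete minimum principle) is more careful than the paper, which takes this decomposition for granted.
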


\section{Numerical Examples and results}
In this section, numerical experiments are conducted on two examples to show the efficiency and applicability of the proposed scheme. The numerical results demonstrate the high accuracy and convergence rate of the proposed finite difference scheme as compared to the upwind scheme on uniform mesh as well as upwind scheme on Shishkin mesh.
\begin{problem}
	Consider the following singularly perturbed parabolic IBVP :-
	\begin{equation} \label{5.1}
	\begin{cases}
	
	\left( \varepsilon \frac{\partial^2{u}}{\partial{x^2}}- 2(2x-1) \frac{\partial{u}}{\partial{x}}-\frac{\partial{u}}{\partial{t}}-4u\right)(x,t) = 0, ~ \text{for} ~ (x,t)\in Q = (0,1) \times (0,1],\\
	u(x,0)=1,~ \text{for} ~ x \in [0,1] ,\\
	u(-1,t)=1,~ u(1,t)=1,~ \text{for} ~t\in (0,1].
	
	\end{cases}
	\end{equation}
\end{problem}
\begin{problem}
	Consider the following singularly perturbed parabolic IBVP :-
	\begin{equation} \label{5.2}
	\begin{cases}
	
	\left( \varepsilon \frac{\partial^2{u}}{\partial{x^2}}- x^p \frac{\partial{u}}{\partial{x}}-\frac{\partial{u}}{\partial{t}}-u\right)(x,t) = 1, ~ \text{for} ~ (x,t)\in Q = (-1,1) \times (0,1],\\
	u(x,0)=1,~ \text{for} ~ x \in [-1,1] ,\\
	u(-1,t)=1,~ u(1,t)=1,~ \text{for} ~t\in (0,1].
	
	\end{cases}
	\end{equation}
\end{problem}

Since the analytical solutions of the considered problems are not known, the double mesh principle is used to estimate the maximum pointwise error as follows:
\begin{align*}
E^{N,M}_\varepsilon = \| {U}^{N,M}(x_i,t_n)- {U}^{2N,2M}(x_i,t_n)\|_{\overline{Q}^{N,M}_{\tau}},
\end{align*}
The corresponding order of convergence $q_{\varepsilon}^{N,M}$ is computed as
\begin{align*}
q^{N,M}_\varepsilon = \frac{\ln\left( {{E^{N,M}_\varepsilon}/E^{2N,2M}_\varepsilon}\right)}{\ln{2}}.
\end{align*}
Also, the $ \varepsilon $-uniform maximum point-wise error $E^{N,M}$ is computed as
\begin{align*}
E^{N,M} = \max\limits_{\varepsilon}E^{N,M}_\varepsilon,
\end{align*}
and the corresponding $ \varepsilon $-uniform order of convergence $q^{N,M}$ is given by
\begin{align*}
q^{N,M} = \frac{\ln{\left( {E^{N,M}}/E^{2N,2M}\right)}}{\ln{2}}.
\end{align*}
The maximum pointwise error $E_\varepsilon^{N,M}$ and the order of convergence $q_\varepsilon^{N,M}$ are computed in Tables 1-6 for Problems 1 and 2, corresponding to different values of $\varepsilon$ and $N$. The numerical solutions, errors and loglog plots are given in Figures 1-3 for Problems 1 and 2. \newline

Tables 1 and 3 display the results computed using upwind scheme on uniform mesh and piecewise uniform Shishkin mesh for Problems 1 and 2, respectively. From Tables 1 and 3, we can see that the numerical method on the uniform mesh is not $\varepsilon$-uniformly convergent. It is clearly evident that the upwind scheme on Shishkin mesh is $\varepsilon$-uniformly convergent with almost first-order accuracy.\newline
Tables 2 and 4 display the results computed using the hybrid scheme on the generalized Shishkin mesh  for Problems 1 and 2, respectively. The results clearly showcase the $\varepsilon$-uniform convergence of the proposed numerical scheme. Table 4 does not clearly reflect the actual theoretical order of convergence of the proposed hybrid scheme (\ref{3.2}) in space as proved in Theorem \ref{thm3.1} due to first order accuracy in time direction. To justify the spatial order of convergence we conduct the numerical experiments by taking $M=N^2$ and display the maximum pointwise errors $E_\varepsilon^{N,M}$ and the corresponding order of convergence $q_\varepsilon^{N,M}$ in Table 5.\newline
In Table 6, the $\varepsilon$-uniform maximum pointwise error $E^{N,M}$ and the corresponding order of convergence $q^{N,M}$ are tabulated to show that the proposed numerical scheme works equally well for different values of $p$.\newline
In Figure 1, the surface plots corresponding to Problems 1 and 2 are displayed which clearly shows that the solutions of the problems exhibit twin boundary layers. The higher efficiency of the proposed hybrid scheme compared to the upwind scheme is showcased in Figures 2 and 3 for Problems 1 and 2, respectively. \newline

\section{Conclusions}
A singularly perturbed parabolic convection-diffusion problem with multiple interior turning point exhibiting twin exponential boundary layers is examined. Analytical aspects of the problem are studied via obtaining theoretical bounds on the solution of the problem and its derivatives. A higher order numerical method is constructed comprising of implicit Euler method for the temporal discretization on uniform mesh and a hybrid numerical scheme on the generalized Shishkin mesh for the spatial discretization. The proposed method is $\varepsilon$-uniformly convergent of order one in the time variable and almost order two in the space variable. Further, numerical experiments are conducted to investigate the performance of the proposed scheme. The numerical results tabulated and the plots displayed clearly demonstrate the higher accuracy of the proposed hybrid scheme on the generalized Shishkin as compared to the simple upwind scheme on the uniform mesh as well as on the standard Shishkin mesh. The results obtained are in good agreement with the theoretical results.

\section*{Acknowledgements}
The first author wish to acknowledge UGC Non-NET Fellowships for financial support vide Ref.No. Sch./139/Non-NET/Ext-152/2018-19/67.

	\begin{table}[h!]
		\begin{center}
	    \caption{The maximum pointwise errors $ E^{N,M}_\varepsilon $ and the corresponding order of convergence $q^{N,M}_\varepsilon $ for the Problem 1 using the upwind scheme on uniform mesh and piecewise uniform Shishkin mesh.}
    	\resizebox{\textwidth}{!}{\begin{tabular}{ c  c  c  c  c  c  c  c  c }
    	\hline
		\textbf{$ \varepsilon \downarrow $} & \textbf{Schemes} & \textbf{$ N=32 $} & \textbf{$ N=64 $} & \textbf{$ N=128 $} & \textbf{$ N=256 $} & \textbf{$ N=512 $} & \textbf{$ N=1024 $} & \textbf{$ N=2048 $}\\
		\hline
		{$2^{-6}$} & \textbf{Upwind scheme on} & 8.65176e-02 & 8.05002e-02 & 5.36364e-02 & 3.35920e-02 & 1.91246e-02 & 1.02613e-02 & 5.32357e-03\\ 
	  & \textbf{uniform mesh} & 1.04000e-01 & 5.85781e-01 & 6.75095e-01 & 8.12684e-01 & 8.98225e-01 & 9.46743e-01 \\
		\hline
		{} & \textbf{Upwind scheme on} & 5.07853e-02 & 3.54176e-02 & 2.28512e-02 & 1.39745e-02 & 8.18788e-03 & 4.66192e-03 & 2.59945e-03\\ 
	&   \textbf{Shishkin mesh} & 5.19943e-01 & 6.32201e-01 & 7.09466e-01 & 7.71239e-01 & 8.12566e-01 & 8.42717e-01 \\
		\hline
		{$2^{-8}$} & \textbf{Upwind scheme on} & 4.73788e-02 & 7.02291e-02 & 8.70753e-02 & 8.14823e-02 & 5.43134e-02 & 3.40536e-02 & 1.94008e-02 \\
	&  \textbf{uniform mesh} & -5.67827e-01 & -3.10194e-01 & 9.57777e-02 & 5.85178e-01 & 6.73501e-01 & 8.11692e-01\\ 
		\hline
		{} & \textbf{Upwind scheme on} & 5.19597e-02 & 3.60848e-02 & 2.32711e-02 & 1.42267e-02 & 8.33626e-03 & 4.74664e-03 & 2.64683e-03 \\ 
    &  \textbf{Shishkin mesh} & 5.26004e-01 & 6.32849e-01 & 7.09940e-01 & 7.71129e-01 & 8.12493e-01 & 8.42643e-01 \\
		\hline
		{$2^{-10}$} & \textbf{Upwind scheme on} & 1.65023e-02 & 2.73950e-02 & 4.60476e-02 & 6.99109e-02 & 8.72143e-02 & 8.17259e-02 & 5.44819e-02 \\
    &  \textbf{uniform mesh} & -7.31245e-01 & -7.49216e-01 & -6.02391e-01 & -3.19047e-01 & 9.37705e-02 & 5.85017e-01\\
		\hline
		{} & \textbf{Upwind scheme on} & 5.22313e-02 & 3.62430e-02 & 2.33723e-02 & 1.42888e-02 & 8.37309e-03 & 4.76773e-03 & 2.65862e-03 \\ 
	&  \textbf{Shishkin mesh} & 5.27215e-01 & 6.32901e-01 & 7.09918e-01 & 7.71050e-01 & 8.12458e-01 & 8.42626e-01\\
		\hline
		{$2^{-12}$} & \textbf{Upwind scheme on} & 6.15711e-03 & 8.53035e-03 & 1.47539e-02 & 2.66066e-02 & 4.57306e-02 & 6.98340e-02 & 8.72492e-02\\
	&  \textbf{uniform mesh} & -4.70351e-01 & -7.90420e-01 & -8.50687e-01 & -7.81377e-01 & -6.10769e-01 & -3.21212e-01\\
		\hline
		{} & \textbf{Upwind scheme on} & 5.22969e-02 & 3.62812e-02 & 2.33966e-02 & 1.43038e-02 & 8.38210e-03 & 4.77295e-03 & 2.66155e-03\\ 
	&  \textbf{Shishkin mesh} & 5.27504e-01 & 6.32920e-01 & 7.09900e-01 & 7.71017e-01 & 8.12432e-01 & 8.42611e-01 \\
		\hline
		{$2^{-14}$} & \textbf{Upwind scheme on} & 3.37025e-03 & 3.07191e-03 & 4.33583e-03 & 7.65960e-03 & 1.43380e-02 & 2.64147e-02 & 4.56525e-02\\
	&  \textbf{uniform mesh} & 1.33717e-01 & -4.97170e-01 & -8.20961e-01 & -9.04502e-01 & -8.81498e-01 & -7.89355e-01\\
		\hline
		{} & \textbf{Upwind scheme on} & 5.23131e-02 & 3.62906e-02 & 2.34026e-02 & 1.43075e-02 & 8.38431e-03 & 4.77422e-03 & 2.66228e-03\\ 
	&  \textbf{Shishkin mesh} & 5.27577e-01 & 6.32926e-01 & 7.09897e-01 & 7.71010e-01 & 8.12426e-01 & 8.42605e-01 \\
		\hline
		{$2^{-16}$} & \textbf{Upwind scheme on} & 2.66032e-03 & 1.65591e-03 & 1.53370e-03 & 2.18569e-03 & 3.90294e-03 & 7.44770e-03 & 1.42354e-02\\
	&  \textbf{uniform mesh} & 6.83973e-01 & 1.10609e-01 & -5.11070e-01 & -8.36474e-01 & -9.32235e-01 & -9.34619e-01\\
		\hline
		{} & \textbf{Upwind scheme on} & 5.23172e-02 & 3.62930e-02 & 2.34041e-02 & 1.43084e-02 & 8.38485e-03 & 4.77454e-03 & 2.66246e-03 \\ 
	&  \textbf{Shishkin mesh} & 5.27595e-01 & 6.32928e-01 & 7.09896e-01 & 7.71009e-01 & 8.12424e-01 & 8.42604e-01 \\
		\hline
		{$2^{-18}$} & \textbf{Upwind scheme on} & 2.48200e-03 & 1.29861e-03 & 8.20163e-04 & 7.66210e-04 & 1.09730e-03 & 1.97007e-03 & 3.79623e-03\\
	&  \textbf{uniform mesh} & 9.34530e-01 & 6.62990e-01 & 9.81693e-02 & -5.18146e-01 & -8.44291e-01 & -9.46316e-01\\
		\hline
		{} & \textbf{Upwind scheme on} & 5.23182e-02 & 3.62936e-02 & 2.34045e-02 & 1.43087e-02 & 8.38499e-03 & 4.77462e-03 & 2.66250e-03\\ 
	&  \textbf{Shishkin mesh} & 5.27599e-01 & 6.32928e-01 & 7.09896e-01 & 7.71008e-01 & 8.12424e-01 & 8.42604e-01\\
		\hline
		{$2^{-20}$} & \textbf{Upwind scheme on} & 2.43737e-03 & 1.20908e-03 & 6.40955e-04 & 4.08070e-04 & 3.82936e-04 & 5.49766e-04 & 9.89729e-04\\
	& \textbf{uniform mesh} & 1.01141e+00 & 9.15617e-01 & 6.51405e-01 & 9.17148e-02 & -5.21716e-01 & -8.48215e-01\\
		\hline
		{} & \textbf{Upwind scheme on} & 5.23184e-02 & 3.62937e-02 & 2.34046e-02 & 1.43087e-02 & 8.38502e-03 & 4.77464e-03 & 2.66251e-03\\ 
	&  \textbf{Shishkin mesh} & 5.27600e-01 & 6.32928e-01 & 7.09896e-01 & 7.71008e-01 & 8.12424e-01 & 8.42604e-01\\
		\hline
		{$2^{-22}$} & \textbf{Upwind scheme on} & 2.42620e-03 & 1.18669e-03 & 5.96101e-04 & 3.18329e-04 & 2.03524e-04 & 1.91424e-04 & 2.75162e-04\\
	&  \textbf{uniform mesh} & 1.03176e+00 & 9.93309e-01 & 9.05037e-01 & 6.45319e-01 & 8.84269e-02 & -5.23509e-01 \\
		\hline
		{} & \textbf{Upwind scheme on} & 5.23185e-02 & 3.62937e-02 & 2.34046e-02 & 1.43087e-02 & 8.38503e-03 & 4.77464e-03 & 2.66251e-03\\ 
	&  \textbf{Shishkin mesh} & 5.27601e-01 & 6.32928e-01 & 7.09896e-01 & 7.71008e-01 & 8.12424e-01 & 8.42604e-01 \\
		\hline
		{$2^{-24}$} & \textbf{Upwind scheme on} & 2.42341e-03 & 1.18109e-03 & 5.84884e-04 & 2.95881e-04 & 1.58620e-04 & 1.01633e-04 & 9.57011e-05\\
	&  \textbf{uniform mesh} & 1.03693e+00 & 1.01389e+00 & 9.83134e-01 & 8.99443e-01 & 6.42199e-01 & 8.67676e-02\\
		\hline
		{} & \textbf{Upwind scheme on} & 5.23185e-02 & 3.62938e-02 & 2.34046e-02 & 1.43087e-02 & 8.38504e-03 & 4.77464e-03 & 2.66251e-03\\ 
	&  \textbf{Shishkin mesh} & 5.27601e-01 & 6.32928e-01 & 7.09896e-01 & 7.71008e-01 & 8.12424e-01 & 8.42604e-01 \\
	     \hline
	$ \mathbf{E^{N,M}}   $ & \textbf{Upwind scheme on} & \textbf{8.65176e-02} & \textbf{8.05002e-02} & \textbf{8.70753e-02} & \textbf{8.14823e-02} & \textbf{8.72143e-02} & \textbf{8.17259e-02} & \textbf{8.72492e-02}\\
		$ \mathbf{p^{N,M}}$ & \textbf{uniform mesh} & \textbf{1.04000e-01} & \textbf{-1.13270e-01} & \textbf{9.57777e-02} & \textbf{-9.80785e-02} & \textbf{9.37705e-02} & \textbf{-9.43478e-02}\\
		\hline
	$ \mathbf{E^{N,M}}$ & \textbf{Upwind scheme on} & \textbf{5.23185e-02} & \textbf{3.62938e-02} & \textbf{2.34046e-02} & \textbf{1.43087e-02} & \textbf{8.38504e-03} & \textbf{4.77464e-03} & \textbf{2.66251e-03}\\
		$ \mathbf{p^{N,M}}$ & \textbf{Shishkin mesh} & \textbf{5.27601e-01} & \textbf{6.32928e-01} & \textbf{7.09896e-01} & \textbf{7.71008e-01} & \textbf{8.12424e-01} & \textbf{8.42604e-01}\\
		\hline
		
	\end{tabular}}
\end{center}
\end{table}

\newpage

\begin{table}[h!]
\begin{center}
\caption{The maximum pointwise errors $ E^{N,M}_\varepsilon $ and the corresponding order of convergence $q^{N,M}_\varepsilon $ for the Problem 1 using the hybrid scheme on the generalized Shishkin mesh.}
\resizebox{\textwidth}{!}{\begin{tabular}{ c  c  c  c  c  c  c  c   }	
		\hline
		\textbf{$ \varepsilon \downarrow $} & \textbf{$ N=32 $} & \textbf{$ N=64 $} & \textbf{$ N=128 $} & \textbf{$ N=256 $} & \textbf{$ N=512 $} & \textbf{$ N=1024 $} & \textbf{$ N=2048 $}\\
		\hline 
		{$2^{-6}$} & 4.76939e-02 & 1.55289e-02 & 4.92103e-03 & 1.54015e-03 & 4.64216e-04 & 1.31311e-04 & 3.58076e-05 \\ 
	& 1.61885e+00 & 1.65792e+00 & 1.67589e+00 & 1.73020e+00 & 1.82181e+00 & 1.87465e+00 \\
		\hline
		{$2^{-8}$} & 5.07576e-02 & 1.63691e-02 & 5.18695e-03 & 1.62227e-03 & 4.90864e-04 & 1.39166e-04 & 3.58076e-05\\
	& 1.63265e+00 & 1.65802e+00 & 1.67687e+00 & 1.72462e+00 & 1.81852e+00 & 1.95847e+00\\ 
		\hline
		{$2^{-10}$} & 5.15288e-02 & 1.65802e-02 & 5.25362e-03 & 1.64287e-03 & 4.97547e-04 & 1.41138e-04 & 3.60069e-05\\
	& 1.63592e+00 & 1.65808e+00 & 1.67710e+00 & 1.72331e+00 & 1.81773e+00 & 1.97076e+00\\
		\hline
		{$2^{-12}$} & 5.17220e-02 & 1.66331e-02 & 5.27030e-03 & 1.64802e-03 & 4.99219e-04 & 1.41631e-04 & 3.61501e-05\\
	& 1.63672e+00 & 1.65810e+00 & 1.67715e+00 & 1.72299e+00 & 1.81753e+00 & 1.97007e+00\\
		\hline
		{$2^{-14}$} & 5.17703e-02 & 1.66463e-02 & 5.27447e-03 & 1.64931e-03 & 4.99637e-04 & 1.41755e-04 & 3.61859e-05\\
	& 1.63692e+00 & 1.65810e+00 & 1.67716e+00 & 1.72291e+00 & 1.81748e+00 & 1.96990e+00\\
		\hline
		{$2^{-16}$} & 5.17823e-02 & 1.66496e-02 & 5.27551e-03 & 1.64963e-03 & 4.99741e-04 & 1.41786e-04 & 3.61948e-05\\
	& 1.63697e+00 & 1.65810e+00 & 1.67717e+00 & 1.72289e+00 & 1.81747e+00 & 1.96985e+00\\
		\hline
		{$2^{-18}$} & 5.17854e-02 & 1.66504e-02 & 5.27578e-03 & 1.64971e-03 & 4.99767e-04 & 1.41793e-04 & 3.61971e-05\\
	& 1.63699e+00 & 1.65810e+00 & 1.67717e+00 & 1.72289e+00 & 1.81747e+00 & 1.96984e+00\\
		\hline
		{$2^{-20}$} & 5.17861e-02 & 1.66506e-02 & 5.27584e-03 & 1.64973e-03 & 4.99774e-04 & 1.41795e-04 & 3.61976e-05\\
	& 1.63699e+00 & 1.65810e+00 & 1.67717e+00 & 1.72289e+00 & 1.81747e+00 & 1.96984e+00\\
		\hline
		{$2^{-22}$} & 5.17863e-02 & 1.66507e-02 & 5.27586e-03 & 1.64974e-03 & 4.99775e-04 & 1.41796e-04 & 3.61978e-05\\
	& 1.63699e+00 & 1.65810e+00 & 1.67717e+00 & 1.72288e+00 & 1.81747e+00 & 1.96984e+00 \\
		\hline
		{$2^{-24}$} & 5.17864e-02 & 1.66507e-02 & 5.27586e-03 & 1.64974e-03 & 4.99776e-04 & 1.41796e-04 & 3.61978e-05\\
	& 1.63699e+00 & 1.65810e+00 & 1.67717e+00 & 1.72288e+00 & 1.81747e+00 & 1.96984e+00\\
		\hline
	$ \mathbf{E^{N,M}}   $ & \textbf{5.17864e-02} & \textbf{1.66507e-02} & \textbf{5.27586e-03} & \textbf{1.64974e-03} & \textbf{4.99776e-04} & \textbf{1.41796e-04} & \textbf{3.61978e-05}\\
		$ \mathbf{p^{N,M}}$ & \textbf{1.63699e+00} & \textbf{1.65810e+00} & \textbf{1.67717e+00} & \textbf{1.72288e+00} & \textbf{1.81747e+00} & \textbf{1.96984e+00}\\
		\hline
		
	\end{tabular}}
	\end{center}
\end{table}

\newpage

\begin{table}[h!]
\begin{center}
\caption{The maximum pointwise errors $ E^{N,M}_\varepsilon $ and the corresponding order of convergence $q^{N,M}_\varepsilon $ for the Problem 2 using the upwind scheme on uniform mesh and piecewise uniform Shishkin mesh.}
	\resizebox{\textwidth}{!}{\begin{tabular}{ c  c  c  c  c  c  c  c  c }		
		\hline
		\textbf{$ \varepsilon \downarrow $} & \textbf{Schemes} & \textbf{$ N=32 $} & \textbf{$ N=64 $} & \textbf{$ N=128 $} & \textbf{$ N=256 $} & \textbf{$ N=512 $} & \textbf{$ N=1024 $} & \textbf{$ N=2048 $}\\
		\hline
		{$2^{-6}$} & \textbf{Upwind scheme on}  & 1.13453e-01 & 1.02884e-01 & 6.76358e-02 & 4.22961e-02 & 2.40523e-02 & 1.29099e-02 & 6.69412e-03\\ 
	& \textbf{uniform mesh} & 1.41078e-01 & 6.05157e-01 & 6.77263e-01 & 8.14348e-01 & 8.97704e-01 & 9.47506e-01\\
		\hline
		{} & \textbf{Upwind scheme on} & 4.21726e-02 & 2.90406e-02 & 1.76665e-02 & 1.01097e-02 & 5.63613e-03 & 3.10684e-03 & 1.69969e-03\\ 
	& \textbf{Shishkin mesh} & 5.38233e-01 & 7.17059e-01 & 8.05281e-01 & 8.42956e-01 & 8.59256e-01 & 8.70173e-01\\
		\hline
		{$2^{-8}$} & \textbf{Upwind scheme on} & 6.66332e-02 & 9.38616e-02 & 1.13505e-01 & 1.05022e-01 & 6.96679e-02 & 4.36559e-02 & 2.48621e-02\\
	& \textbf{uniform mesh} & -4.94295e-01 & -2.74142e-01 & 1.12051e-01 & 5.92133e-01 & 6.74316e-01 & 8.12227e-01\\ 
		\hline
		{} & \textbf{Upwind scheme on} & 4.28790e-02 & 2.95701e-02 & 1.83352e-02 & 1.07210e-02 & 6.02918e-03 & 3.31096e-03 & 1.79677e-03\\ 
	& \textbf{Shishkin mesh} & 5.36133e-01 & 6.89523e-01 & 7.74177e-01 & 8.30403e-01 & 8.64711e-01 & 8.81843e-01\\
		\hline
		{$2^{-10}$} & \textbf{Upwind scheme on} & 2.50205e-02 & 3.78181e-02 & 6.09623e-02 & 9.09846e-02 & 1.12695e-01 & 1.05283e-01 & 7.00973e-02\\
	& \textbf{uniform mesh} & -5.95962e-01 & -6.88841e-01 & -5.77706e-01 & -3.08733e-01 & 9.81517e-02 & 5.86844e-01\\
		\hline
		{} & \textbf{Upwind scheme on} & 4.26028e-02 & 2.90409e-02 & 1.79139e-02 & 1.05342e-02 & 6.02106e-03 & 3.36637e-03 & 1.84426e-03\\ 
	& \textbf{Shishkin mesh} & 5.52865e-01 & 6.97007e-01 & 7.65993e-01 & 8.06995e-01 & 8.38823e-01 & 8.68152e-01\\
		\hline
		{$2^{-12}$} & \textbf{Upwind scheme on} & 1.07982e-02 & 1.26842e-02 & 2.00424e-02 & 3.49330e-02 & 5.93213e-02 & 9.01773e-02 & 1.12459e-01\\
	& \textbf{uniform mesh} & -2.32242e-01 & -6.60028e-01 & -8.01536e-01 & -7.63958e-01 & -6.04215e-01 & -3.18557e-01\\
		\hline
		{} & \textbf{Upwind scheme on} & 4.24955e-02 & 2.88346e-02 & 1.77151e-02 & 1.03505e-02 & 5.89787e-03 & 3.30847e-03 & 1.83398e-03\\ 
	& \textbf{Shishkin mesh} & 5.59509e-01 & 7.02825e-01 & 7.75282e-01 & 8.11430e-01 & 8.34030e-01 & 8.51187e-01\\
		\hline
		{$2^{-14}$} & \textbf{Upwind scheme on} & 6.94865e-03 & 5.38129e-03 & 6.36470e-03 & 1.03072e-02 & 1.87310e-02 & 3.41866e-02 & 5.89004e-02\\
	& \textbf{uniform mesh} & 3.68781e-01 & -2.42141e-01 & -6.95489e-01 & -8.61775e-01 & -8.68003e-01 & -7.84846e-01\\
		\hline
		{} & \textbf{Upwind scheme on} & 4.24662e-02 & 2.87775e-02 & 1.76555e-02 & 1.02871e-02 & 5.83986e-03 & 3.26114e-03 & 1.80219e-03\\ 
	& \textbf{Shishkin mesh} & 5.61370e-01 & 7.04826e-01 & 7.79276e-01 & 8.16836e-01 & 8.40559e-01 & 8.55626e-01\\
		\hline
		{$2^{-16}$} & \textbf{Upwind scheme on} & 5.96687e-03 & 3.48485e-03 & 2.68220e-03 & 3.18574e-03 & 5.22551e-03 & 9.70323e-03 & 1.83997e-02\\
	& \textbf{uniform mesh} & 7.75877e-01 & 3.77678e-01 & -2.48212e-01 & -7.13941e-01 & -8.92894e-01 & -9.23144e-01\\
		\hline
		{} & \textbf{Upwind scheme on} & 4.24587e-02 & 2.87629e-02 & 1.76399e-02 & 1.02705e-02 & 5.82268e-03 & 3.24498e-03 & 1.78757e-03\\ 
	& \textbf{Shishkin mesh} & 5.61849e-01 & 7.05367e-01 & 7.80336e-01 & 8.18751e-01 & 8.43476e-01 & 8.60212e-01\\
		\hline
		{$2^{-18}$} & \textbf{Upwind scheme on} & 5.72019e-03 & 3.00620e-03 & 1.74425e-03 & 1.33858e-03 & 1.59345e-03 & 2.63078e-03 & 4.93893e-03\\
	& \textbf{uniform mesh} & 9.28122e-01 & 7.85331e-01 & 3.81905e-01 & -2.51453e-01 & -7.23334e-01 & -9.08708e-01\\
		\hline
		{} & \textbf{Upwind scheme on} & 4.24568e-02 & 2.87592e-02 & 1.76360e-02 & 1.02663e-02 & 5.81820e-03 & 3.24057e-03 & 1.78324e-03\\ 
	& \textbf{Shishkin mesh} & 5.61969e-01 & 7.05505e-01 & 7.80610e-01 & 8.19266e-01 & 8.44324e-01 & 8.61746e-01\\
		\hline
		{$2^{-20}$} & \textbf{Upwind scheme on} & 5.65845e-03 & 2.88625e-03 & 1.50867e-03 & 8.72506e-04 & 6.68615e-04 & 7.96841e-04 & 1.31991e-03\\
	& \textbf{uniform mesh} & 9.71208e-01 & 9.35920e-01 & 7.90040e-01 & 3.83990e-01 & -2.53117e-01 & -7.28071e-01\\
		\hline
		{} & \textbf{Upwind scheme on} & 4.24563e-02 & 2.87583e-02 & 1.76350e-02 & 1.02652e-02 & 5.81706e-03 & 3.23945e-03 & 1.78211e-03\\ 
	& \textbf{Shishkin mesh} & 5.62000e-01 & 7.05539e-01 & 7.80680e-01 & 8.19397e-01 & 8.44543e-01 & 8.62159e-01\\
		\hline
		{$2^{-22}$} & \textbf{Upwind scheme on} & 5.64300e-03 & 2.85625e-03 & 1.44971e-03 & 7.55718e-04 & 4.36339e-04 & 3.34133e-04 & 3.98445e-04\\
	& \textbf{uniform mesh} & 9.82342e-01 & 9.78362e-01 & 9.39840e-01 & 7.92397e-01 & 3.85028e-01 & -2.53958e-01\\
		\hline
		{} & \textbf{Upwind scheme on} & 4.24562e-02 & 2.87581e-02 & 1.76347e-02 & 1.02649e-02 & 5.81678e-03 & 3.23916e-03 & 1.78183e-03\\ 
	& \textbf{Shishkin mesh} & 5.62007e-01 & 7.05548e-01 & 7.80697e-01 & 8.19430e-01 & 8.44599e-01 & 8.62264e-01\\
		\hline
		{$2^{-24}$} & \textbf{Upwind scheme on} & 5.63914e-03 & 2.84875e-03 & 1.43496e-03 & 7.26504e-04 & 3.78204e-04 & 2.18190e-04 & 1.67022e-04\\
	& \textbf{uniform mesh} & 9.85149e-01 & 9.89317e-01 & 9.81968e-01 & 9.41807e-01 & 7.93577e-01 & 3.85547e-01\\
		\hline
		{} & \textbf{Upwind scheme on} & 4.24561e-02 & 2.87580e-02 & 1.76347e-02 & 1.02649e-02 & 5.81671e-03 & 3.23909e-03 & 1.78176e-03\\ 
	& \textbf{Shishkin mesh} & 5.62009e-01 & 7.05550e-01 & 7.80701e-01 & 8.19438e-01 & 8.44613e-01 & 8.62290e-01\\
		\hline
	$ \mathbf{E^{N,M}}   $ & \textbf{Upwind scheme on} & \textbf{1.13453e-01} & \textbf{1.02884e-01} & \textbf{1.13505e-01} & \textbf{1.05022e-01} & \textbf{1.12695e-01} & \textbf{1.05283e-01} & \textbf{1.12459e-01}\\
		$ \mathbf{p^{N,M}}   $ & \textbf{uniform mesh} & \textbf{1.41078e-01} & \textbf{-1.41734e-01} & \textbf{1.12051e-01} & \textbf{-1.01728e-01} & \textbf{9.81517e-02} & \textbf{-9.51197e-02} \\
		\hline
		$ \mathbf{E^{N,M}}$ & \textbf{Upwind scheme on} & \textbf{4.28790e-02} & \textbf{2.95701e-02} & \textbf{1.83352e-02} & \textbf{1.07210e-02} & \textbf{6.02918e-03} & \textbf{3.36637e-03}& \textbf{1.84426e-03}\\
		$ \mathbf{p^{N,M}}$ & \textbf{Shishkin mesh} & \textbf{5.36133e-01} & \textbf{6.89523e-01} & \textbf{7.74177e-01} & \textbf{8.30403e-01} & \textbf{8.40768e-01} & \textbf{8.68152e-01}\\
		\hline
	\end{tabular}}
	\end{center}
\end{table}

\newpage

\begin{table}[h!]
\begin{center}
\caption{The maximum pointwise errors $ E^{N,M}_\varepsilon $ and the corresponding order of convergence $q^{N,M}_\varepsilon $ for the Problem 2 using the hybrid scheme on the generalized Shishkin mesh with $p=3$.}
	\resizebox{\textwidth}{!}{\begin{tabular}{ c  c  c  c  c  c  c  c   }
		\textbf{$ \varepsilon \downarrow $} & \textbf{$ N=32 $} & \textbf{$ N=64 $} & \textbf{$ N=128 $} & \textbf{$ N=256 $} & \textbf{$ N=512 $} & \textbf{$ N=1024 $} & \textbf{$ N=2048 $}\\
		\hline      
		{$2^{-6}$} & 1.31600e-02 & 4.59802e-03 & 1.43004e-03 & 7.16764e-04 & 3.58819e-04 & 1.79519e-04 & 8.97870e-05\\ 
	& 1.51708e+00 & 1.68495e+00 & 9.96489e-01 & 9.98242e-01 & 9.99120e-01 & 9.99560e-01\\
		\hline
		{$2^{-8}$} & 1.58487e-02 & 4.76817e-03 & 1.46684e-03 & 7.63095e-04 & 3.58819e-04 & 1.79519e-04 & 8.97869e-05\\
	& 1.73286e+00 & 1.70072e+00 & 9.42778e-01 & 1.08860e+00 & 9.99120e-01 & 9.99560e-01\\ 
		\hline
		{$2^{-10}$} & 1.64697e-02 & 4.99240e-03 & 1.44004e-03 & 7.21264e-04 & 3.60532e-04 & 1.81544e-04 & 8.97869e-05\\
	& 1.72201e+00 & 1.79362e+00 & 9.97510e-01 & 1.00040e+00 & 9.89812e-01 & 1.01574e+00\\
		\hline
		{$2^{-12}$} & 1.66249e-02 & 5.04828e-03 & 1.44653e-03 & 7.18025e-04 & 3.59389e-04 & 1.79750e-04 & 8.98677e-05\\
	& 1.71949e+00 & 1.80320e+00 & 1.01049e+00 & 9.98490e-01 & 9.99551e-01 & 1.00012e+00\\
		\hline
		{$2^{-14}$} & 1.66637e-02 & 5.06223e-03 & 1.45146e-03 & 7.17533e-04 & 3.59017e-04 & 1.79589e-04 & 8.98166e-05\\
	& 1.71887e+00 & 1.80227e+00 & 1.01638e+00 & 9.98991e-01 & 9.99352e-01 & 9.99647e-01\\
		\hline
		{$2^{-16}$} & 1.66734e-02 & 5.06572e-03 & 1.45269e-03 & 7.17539e-04 & 3.59021e-04 & 1.79568e-04 & 8.97982e-05\\
	& 1.71871e+00 & 1.80204e+00 & 1.01760e+00 & 9.98988e-01 & 9.99539e-01 & 9.99772e-01\\
		\hline
		{$2^{-18}$} & 1.66759e-02 & 5.06659e-03 & 1.45300e-03 & 7.17541e-04 & 3.59022e-04 & 1.79569e-04 & 8.97984e-05\\
	& 1.71867e+00 & 1.80198e+00 & 1.01790e+00 & 9.98988e-01 & 9.99539e-01 & 9.99772e-01\\
		\hline
		{$2^{-20}$} & 1.66765e-02 & 5.06681e-03 & 1.45308e-03 & 7.17541e-04 & 3.59023e-04 & 1.79569e-04 & 8.97985e-05\\
	& 1.71866e+00 & 1.80197e+00 & 1.01798e+00 & 9.98987e-01 & 9.99538e-01 & 9.99772e-01\\
		\hline
		{$2^{-22}$} & 1.66766e-02 & 5.06686e-03 & 1.45310e-03 & 7.17541e-04 & 3.59023e-04 & 1.79569e-04 & 8.97985e-05\\
	& 1.71866e+00 & 1.80196e+00 & 1.01800e+00 & 9.98987e-01 & 9.99538e-01 & 9.99772e-01\\
		\hline
		{$2^{-24}$} & 1.66767e-02 & 5.06688e-03 & 1.45310e-03 & 7.17541e-04 & 3.59023e-04 & 1.79569e-04 & 8.97985e-05\\
	& 1.71866e+00 & 1.80196e+00 & 1.01800e+00 & 9.98987e-01 & 9.99538e-01 & 9.99772e-01\\
		\hline
	$ \mathbf{E^{N,M}}   $ & \textbf{1.66767e-02} & \textbf{5.06688e-03} & \textbf{1.46684e-03} & \textbf{7.63095e-04} & \textbf{3.60532e-04} & \textbf{1.81544e-04} & \textbf{8.98677e-05}\\
		$ \mathbf{p^{N,M}}$ & \textbf{1.71866e+00} & \textbf{1.78838e+00} & \textbf{9.42778e-01} & \textbf{1.08173e+00} & \textbf{9.89812e-01} & \textbf{1.01444e+00}\\
		\hline
		
	\end{tabular}}
	\end{center}
	\end{table}

\newpage

\begin{table}[h!]
\begin{center}
\caption{The maximum pointwise errors $ E^{N,M}_\varepsilon $ and the corresponding order of convergence $q^{N,M}_\varepsilon $ for the Problem 2 using the hybrid scheme on the generalized Shishkin mesh with $M=N^2$ and $p=3$.}
	\resizebox{\textwidth}{!}{\begin{tabular}{ c  c  c  c  c  c    }
		\hline
		\textbf{$ \varepsilon \downarrow $} & \textbf{$ N=32 $} & \textbf{$ N=64 $} & \textbf{$ N=128 $} & \textbf{$ N=256 $} & \textbf{$ N=512 $}\\
		\hline
		{$2^{-6}$} & 1.61692e-02 & 6.74563e-03 & 1.89181e-03 & 6.31041e-04 & 2.03613e-04 \\ 
	    & 1.26122e+00 & 1.83419e+00 & 1.58396e+00 & 1.63190e+00\\
		\hline
		{$2^{-8}$} & 1.89880e-02 & 6.50248e-03 & 2.17098e-03 & 7.70847e-04 & 2.33877e-04\\
	    & 1.54602e+00 & 1.58264e+00 & 1.49383e+00 & 1.72070e+00\\ 
		\hline
		{$2^{-10}$} & 1.96402e-02 & 6.74166e-03 & 2.25106e-03 & 7.47893e-04 & 2.41407e-04 \\
	    & 1.54264e+00 & 1.58250e+00 & 1.58970e+00 & 1.63136e+00 \\
		\hline
		{$2^{-12}$} & 1.98031e-02 & 6.80120e-03 &     2.27317e-03 & 7.54483e-04 & 2.43306e-04\\
	    & 1.54187e+00 & 1.58109e+00 & 1.59114e+00 & 1.63272e+00\\
		\hline
		{$2^{-14}$} & 1.98439e-02 & 6.81607e-03 & 2.27873e-03 & 7.56505e-04 & 2.43837e-04\\
	 & 1.54168e+00 & 1.58071e+00 & 1.59081e+00 & 1.63343e+00\\
		\hline
		{$2^{-16}$} & 1.98540e-02 & 6.81978e-03 & 2.28013e-03 & 7.57016e-04 & 2.44028e-04\\
	& 1.54164e+00 & 1.58061e+00 & 1.59072e+00 & 1.63328e+00 \\
		\hline
		{$2^{-18}$} & 1.98566e-02 & 6.82071e-03 & 2.28047e-03 & 7.57145e-04 & 2.44076e-04 \\
	& 1.54162e+00 & 1.58059e+00 & 1.59069e+00 & 1.63324e+00 \\
		\hline
		{$2^{-20}$} & 1.98572e-02 & 6.82094e-03 & 2.28056e-03 & 7.57177e-04 & 2.44088e-04 \\
	& 1.54162e+00 & 1.58058e+00 & 1.59069e+00 & 1.63323e+00\\
		\hline
		{$2^{-22}$} & 1.98574e-02 & 6.82100e-03 & 2.28058e-03 & 7.57185e-04 & 2.44091e-04 \\
	& 1.54162e+00 & 1.58058e+00 & 1.59069e+00 & 1.63323e+00 \\
		\hline
		{$2^{-24}$} & 1.98574e-02 & 6.82102e-03 & 2.28059e-03 & 7.57187e-04 & 2.44092e-04 \\
	& 1.54162e+00 & 1.58058e+00 & 1.59068e+00 & 1.63323e+00\\
		\hline
	$ \mathbf{E^{N,M}}   $ & \textbf{1.98574e-02} & \textbf{6.82102e-03} & \textbf{2.28059e-03} & \textbf{7.70847e-04} & \textbf{2.44092e-04} \\
		$ \mathbf{p^{N,M}}$ & \textbf{1.54162e+00} & \textbf{1.58058e+00} & \textbf{1.56489e+00} & \textbf{1.65902e+00} \\
		\hline		
	\end{tabular}}
	\end{center}
	\end{table}

\newpage

\begin{table}[h!]
\begin{center}
\caption{The $\varepsilon$-uniform maximum pointwise errors $ E^{N,M} $ and the corresponding order of convergence $q^{N,M} $ for the Problem 2 using the  hybrid scheme on the generalized Shishkin mesh with $M=N^2$ corresponding to different values of $p$.}
	\resizebox{\textwidth}{!}{\begin{tabular}{ c  c  c  c  c  c  c }
		\hline
		\textbf{$ p \downarrow $}& \textbf{$ N=32 $} & \textbf{$ N=64 $} & \textbf{$ N=128 $} & \textbf{$ N=256 $} & \textbf{$ N=512 $} \\
		\hline
		\textbf{1} & 1.98574e-02 & 6.82102e-03 & 2.28059e-03 & 7.71391e-04 & 2.44092e-04\\ 
		 & 1.54162e+00 & 1.58058e+00 & 1.56387e+00 & 1.66004e+00\\
		\midrule
		\textbf{5} & 1.98574e-02 & 6.82101e-03 & 2.28059e-03 & 7.81153e-04 & 2.44092e-04 \\
		& 1.54162e+00 & 1.58058e+00 & 1.54573e+00 & 1.67818e+00  \\
		\midrule
		{\textbf{7}} & 2.29690e-02 & 6.82101e-03 & 2.28059e-03 & 8.06812e-04 & 2.44092e-04\\
		& 1.75163e+00 & 1.58058e+00 & 1.49910e+00 & 1.72481e+00\\
		\midrule
		 {\textbf{9}} & 3.19970e-02 & 6.82101e-03 & 2.28059e-03 & 8.52480e-04 & 2.44091e-04\\
		& 2.22988e+00 & 1.58058e+00 & 1.41967e+00 & 1.80424e+00 \\ 
		\hline
		
	\end{tabular}}
\end{center}
\end{table}


\begin{thebibliography}{10}

\bibitem{becher} {\sc S. Becher and H.-G. Roos}, {\em Richardson extrapolation for a singularly perturbed turning point problem
with exponential boundary layers}, J. Comput. Appl. Math. 290 (2015), pp. 334-351.

\bibitem{farrell} {\sc  P.~A. Farrell, A.~F. Hegarty, J.~J.~H.~Miller, E.~O'Riordan and G.~I.~Shishkin}, {\em Robust
Computational Techniques for Boundary Layers}, Vol. 16 of Applied Mathematics (Boca Raton). Chapman \& Hall/CRC, Boca Raton, FL, 2000. 

\bibitem{geng2013} {\sc F. Geng and S. P. Qian}, {\em Reproducing kernel method for singularly perturbed turning point problems
having twin boundary layers}, Appl. Math. Lett. 26 (2013), pp. 998-1004.

\bibitem{kadalbajoo2011} {\sc M.~K. Kadalbajoo, P. Arora and V. Gupta}, {\em Collocation method using artificial viscosity for solving stiff singularly perturbed turning point problem having twin boundary layers}, Comput. Math. Appl. 61 (2011), pp. 1595-1607.

\bibitem{kadalbajoo2010} {\sc M.~K. Kadalbajoo and V. Gupta}, {\em A parameter-uniform B-spline collocation method for solving singularly perturbed turning point problem having twin boundary layers}, Int. J. Comput. Math. 87 (2010), pp. 3218-3235. 

\bibitem{kadalbajoo2001} {\sc M.~K. Kadalbajoo and K.~C. Patidar}, {\em Variable mesh spline approximation method for solving singularly perturbed turning point problems having boundary layer(s)}, Comput. Math. Appl. 42 (2001),
pp. 1439-1453.

\bibitem{kumar2019} {\sc D. Kumar}, {\em A parameter-uniform method for singularly perturbed turning point problems exhibiting
	interior or twin boundary layers}, Int. J. Comput. Math. 96 (2019), pp. 865-882.

\bibitem{MR0241822}
{\sc O.~A. Ladyzenskaja, V.~A. Solonnikov, and N.~N. Ural'ceva}, {\em Linear and quasilinear equations of parabolic type}, Translated from the Russian by S. Smith. Translations of Mathematical Monographs, Vol. 23, American Mathematical Society, Providence, R.I., 1968.

\bibitem{miller} {\sc J.~J.~H. Miller,  E. O'Riordan and G.~I.~Shishkin}, {\em Fitted Numerical Methods for Singular
Perturbation Problems: error estimates
in the maximum norm for linear problems in one and two dimensions}, World Scientific Publishing Co., Inc., River Edge, NJ, 1996.
  
\bibitem{natesan2003} {\sc S. Natesan, J. Jayakumar and J. Vigo-Aguiar}, {\em Parameter-uniform numerical method for singularly
 	perturbed turning point problems exhibiting boundary layers}, J. Comput. Appl. Math. 158 (2003), pp. 121-134.
  
\bibitem{Potter}
{\sc M.~H. Protter and H.~F. Weinberger}, {\em Maximum principles in differential equations}, Springer Science \& Business Media, 2012.

\bibitem{roos}  {\sc H.-G. Roos, M. Stynes and L. Tobiska}, {\em Numerical Methods for Singularly Perturbed
Differential Equations: convection-diffusion and 
flow problems}, Vol. 24 of Springer Series in Computational Mathematics. Springer-Verlag, Berlin, 1996.

\bibitem{vulanovic} {\sc R. Vulanovi\`c and P.A. Farrell}, {\em Analysis of multiple turning point problems}, Rad. Mat. 8 (1992/96), pp. 105-113.

\end{thebibliography}
\end{document}